\newtheorem{theorem}{Theorem}[section]
\newtheorem{lemma}[theorem]{Lemma}
\newtheorem{corollary}[theorem]{Corollary}
\newtheorem{theoremA}{Theorem}
\theoremstyle{definition}
\newtheorem{remark}[theorem]{Remark}
\newtheorem{definition}[theorem]{Definition}
\theoremstyle{plain}
\newtheorem*{corollarystar}{Corollary}
\newtheorem*{definition*}{Definition}
\newcommand{\cO}{\mathcal O}
\newcommand{\cP}{\mathcal P}
\newcommand{\cS}{\mathcal S}
\def\Cz{\mathbb{C}}
\def\Nz{\mathbb{N}}
\def\Qz{\mathbb{Q}}
\def\Zz{\mathbb{Z}}
\newcommand{\mfp}{\mathfrak p}
\newcommand{\mfq}{\mathfrak q}
\newcommand{\mfP}{\mathfrak P}
\newcommand{\Gal}{\mathrm{Gal}}
\newcommand{\pk}{\mathcal{P}_K}
\newcommand{\pkx}{\mathcal{P}_{K'}}
\newcommand{\pks}{\mathcal{P}^{(1)}_K}
\newcommand{\pksx}{\mathcal{P}^{(1)}_{K'}}
\newcommand{\Hqab}{\widecheck{G}_{\Qz}}
\newcommand{\Hkab}{\widecheck{G}_{K}}
\newcommand{\Hkabx}{\widecheck{G}_{K'}}
\newcommand{\Hktauab}{\widecheck{G}_{K_\tau}}
\newcommand{\Hnab}{\widecheck{G}_{N}}
\newcommand{\Frob}[1]{\textup{Frob}_{#1}}
\newcommand{\HomL}{\textup{Hom}_{L\textup{-series}}(\Hkab, \Hkabx)}
\newcommand{\HomLtau}{\textup{Hom}_{L\textup{-series}}(\Hktauab, \Hkabx)}
\newcommand{\HomLl}{\textup{Hom}_{L\textup{-series}}(\Hkab[l], \Hkabx[l])}
\newcommand{\HomPld}{\textup{Hom}^\delta_{\mathcal{P}}(\Hkab[l], \Hkabx[l])}
\newcommand{\HomPNl}{\textup{Hom}_{\mathcal{P}}(\Hkab[l], \Hnab[l])}
\newcommand{\HomQPld}{\textup{Hom}_{\mathcal{P}}^\delta(\Hqab[l], \Hkab[l])}
\newcommand{\HomQL}{\textup{Hom}_{L\textup{-series}}(\Hqab[l], \Hkab[l])}
\newcommand{\Homfield}{\textup{Hom}(K, K')}
\newcommand{\HomfieldN}{\textup{Hom}(K, N)}
\newcommand{\HomfieldQ}{\textup{Hom}(\Qz, K)}
\newcommand{\DistTo}{\xrightarrow{
   \,\smash{\raisebox{-0.45ex}{\ensuremath{\scriptstyle\sim}}}\,}}
\newcommand\restr[2]{{
  \left.\kern-\nulldelimiterspace 
  #1 
  \vphantom{\big|} 
  \right|_{#2} 
  }}
\newcommand{\pow}{\mathbb{P}} 
\newcommand{\im}{\textup{im}}
\begin{document}

\date{\today\ (version 1.0)} 
\title{$L$-series and homomorphisms of number fields}
\author[H.~Smit]{Harry Smit}
\address{Max Planck Institute for Mathematics, Vivatsgasse 7, 53111 Bonn, Germany}
\email{smit@mpim-bonn.mpg.de}
\subjclass[2010]{11R37, 11R42}
\keywords{\normalfont Class field theory, $L$-series, arithmetic equivalence}

\begin{abstract}
While the zeta function does not determine a number field uniquely, the $L$-series of a well-chosen Dirichlet character does. Moreover, isomorphisms between two number fields are in natural bijection with $L$-series preserving isomorphisms of $l$-torsion subgroups of the Dirichlet character groups. We extend this by showing that homomorphisms between number fields are in natural bijection with group homomorphisms between $l$-torsion subgroups of the Dirichlet character groups abiding a divisibility condition on the $L$-series when $l$ is sufficiently large.
\end{abstract}

\maketitle

\section*{Introduction}
A number field is not uniquely determined up to isomorphism by its zeta function, i.e.\ there exist nonisomorphic number fields with the same number of prime ideals of every norm (\cite[p.\ 671--672]{gassmann}). Even adding the ramification degrees is insufficient, as Komatsu \cite{komatsu} showed that a number field is not uniquely determined by the the adele ring either. Successful attempts at adding information so that the underlying number field is uniquely determined have been made. These additions come in the form of describing the Galois theoretic objects associated to the primes such as Frobenius elements in the absolute Galois group or a quotient of this group. 

One such example is the Neukirch-Uchida theorem (\cite[Satz 2]{neukirch} and \cite[Ch.\ XII, \S 2]{neukirchcohom}): it states that a number field is uniquely determined by the absolute Galois group. Moreover, the natural map from the set of isomorphisms between the number fields to the set of outer isomorphisms between the absolute Galois groups is a bijection \cite[Main Thm.]{uchida2}. The proof of Neukirch and Uchida shows that the decomposition groups inside the absolute Galois groups are mapped to each other, creating a norm-preserving bijection of primes, along with some information on the Frobenius elements. Progress has been made in connecting homomorphisms of function fields with homomorphisms of the absolute Galois groups as well, e.g.\ of curves defined over sub-$p$-adic fields \cite{mochizuki}. As a result, homomorphisms of function fields over $\Qz$ correpond to homomorphisms of the absolute Galois groups subject to some conditions \cite[Thm.\ B]{mochizuki}.

A second way is to add information on the Frobenius elements through Dirichlet $L$-series. Positive results have been achieved here as well. For example, for every number field $K$, even though its zeta function may not be unique, there exists a well-chosen Dirichlet character whose $L$-series is different from any Dirichlet $L$-series of any number field not isomorphic to $K$ \cite[Thm.\ 10.1]{CdSLMS}. Moreover, the natural map from isomorphisms between number fields and $L$-series preserving isomorphisms between the $l$-torsion (for prime $l$) of the Dirichlet character groups is a bijection \cite[Thm.\ A]{GDTHJS}. In this article we extend this to include \emph{homomorphisms} of number fields. We prove these are, through the natural map, in bijection with homomorphisms between the $l$-torsion (for sufficiently large prime $l$) of the Dirichlet character groups with a \emph{divisibility condition} on the $L$-series that we explain below.

The $L$-series $L(\chi, s)$ can be written as an infinite product of reciprocals of polynomials $L_p(\chi, T)$ in $T = p^{-s}$:
\[
L(\chi, s) = \prod_{\text{prime number } p} L_p(\chi, p^{-s})^{-1}.
\]
A homomorphism of number fields $K \hookrightarrow K'$ induces a map $\psi$ from the group $\Hkab$ of Dirichlet characters of $K$ to $\Hkabx$. For a character $\chi \in \Hkab$, the $L$-series of $\chi$ and $\psi(\chi)$ are connected through the following divisibility property: for any prime number $p$ the polynomial $L_p(\chi, T)$ divides $L_p(\psi(\chi, T))$, see Lemma~\ref{lemma:iota_in_HomL}. This motivates the following definition:

\begin{definition*}
Let $K, K'$ be number fields and $\chi \in \Hkab$, $\chi' \in \Hkabx$. We say that $L(\chi, s)$ \emph{divides} $L(\chi', s)$ if for every prime number $p$, $L_p(\chi, T)$ divides $L_p(\chi', T)$ as polynomials.
\end{definition*}

Denote by $\Hkab[l]$ the $l$-torsion subgroup of $\Hkab$, and let $\HomLl$ denote the set of homomorphisms $\psi\colon\Hkab[l] \to \Hkabx[l]$ such that $L(\chi, s)$ divides $L(\psi(\chi), s)$ for every $\chi \in \Hkab[l]$. Our main result is the following.

\begin{theoremA}\label{theoremA:bijection_of_homs}
Let $K, K'$ be number fields, and denote their common Galois closure by $N$. Let $l > [N:\Qz]^2$ be a prime number. Then the natural map
\begin{equation*}
\begin{tikzcd}
\Omega \colon \Homfield \arrow{r} & \HomLl.
\end{tikzcd}
\end{equation*}
is a bijection.
\end{theoremA}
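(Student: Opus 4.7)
The plan is to prove injectivity and surjectivity of $\Omega$ separately. The surjectivity is the main content, and I intend to handle it by passing to the common Galois closure $N$ and reducing to the isomorphism bijection of \cite[Thm.\ A]{GDTHJS}.

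For injectivity, suppose $\iota_1, \iota_2 \in \Homfield$ with $\Omega(\iota_1) = \Omega(\iota_2)$. Dually, the induced maps $(\iota_j)_* : G^{\ab}_{K'} \to G^{\ab}_K$ agree modulo $l$-th powers. If $\iota_1 \neq \iota_2$, then there is a prime $\mfP$ of $K'$ whose contracted primes $\mfp_j = \iota_j^{-1}(\mfP)$ in $K$ are distinct, and by the Chebotarev density theorem together with $l > [N:\Qz]^2$ I would choose $\mfP$ so that $\Frob{\mfp_1} \not\equiv \Frob{\mfp_2} \pmod{l}$ in $G^{\ab}_K$, contradicting the assumption.

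For surjectivity, let $\psi \in \HomLl$ and fix embeddings $K, K' \hookrightarrow N$. Because $l > [N:\Qz]$, both restriction maps $\Hkab[l] \to \Hnab[l]$ and $\Hkabx[l] \to \Hnab[l]$ are injective, with images equal to the $\Gal(N/K)$- and $\Gal(N/K')$-fixed subgroups of $\Hnab[l]$, respectively. This lets me view $\psi$ as a map between specific sub-$\Gal(N/\Qz)$-modules of $\Hnab[l]$. The key step is then to extend this to a $\Gal(N/\Qz)$-equivariant isomorphism $\tilde\Psi : \Hnab[l] \to \Hnab[l]$ that preserves $L$-series with equality, not mere divisibility. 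Working prime-by-prime, the condition $L_p(\chi, T) \mid L_p(\psi(\chi), T)$ precisely mirrors the Euler factor behavior of a field embedding: Frobenius orbits on primes of $N$ above $p$ contributing to $L_p(\chi, T)$ must embed into those contributing to $L_p(\psi(\chi), T)$. This combinatorial constraint should force $\tilde\Psi$ to come from a genuine automorphism $\sigma$ of $N$ via \cite[Thm.\ A]{GDTHJS}; the divisibility property then translates to $\sigma(K) \subseteq K'$ inside $N$, producing the desired field homomorphism $\iota = \sigma|_K$ with $\Omega(\iota) = \psi$.

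The main obstacle I anticipate is precisely this extension step: upgrading a map satisfying a divisibility condition on the $\Gal(N/K)$-fixed subgroup to a $\Gal(N/\Qz)$-equivariant isomorphism on all of $\Hnab[l]$ preserving $L$-series equality. This should naturally require two rounds of averaging over Galois orbits, one to pass from $\Hnab[l]$ down to $\Hkab[l]$ and one to lift from $\Hkabx[l]$ back up to $\Hnab[l]$, which is where the quadratic bound $l > [N:\Qz]^2$ presumably enters to ensure invertibility of both averaging operations modulo $l$.
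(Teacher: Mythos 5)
Your high-level plan---pass to the common Galois closure $N$ and try to reduce to the isomorphism result of~\cite{GDTHJS}---is a natural first attempt, but the central step of your surjectivity argument is asserted rather than proved, and it is not clear it can be carried out.

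The heart of your argument is the claim that $\psi$, viewed via restriction as a map from the $\Gal(N/K)$-fixed subgroup of $\Hnab[l]$ to the $\Gal(N/K')$-fixed subgroup, ``should'' extend to a $\Gal(N/\Qz)$-equivariant isomorphism $\tilde\Psi$ of $\Hnab[l]$ that preserves $L$-series with equality. This is precisely where all the content lies, and you explicitly flag it as the open step. Your proposed mechanism (``two rounds of averaging over Galois orbits'') does not address the two real obstacles. First, $\psi$ is a priori not surjective onto the $\Gal(N/K')$-fixed subgroup (it cannot be if $K\not\simeq K'$), so you are trying to promote a genuinely non-surjective map to a bijection without any new input. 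Second, $\Gal(N/\Qz)$-equivariance only determines a candidate $\tilde\Psi$ on the $\Gal(N/\Qz)$-orbit of the $\Gal(N/K)$-fixed part, which in general is a proper subset of $\Hnab[l]$, so most of $\tilde\Psi$ remains undetermined; there is also a non-trivial consistency condition where distinct group elements map the fixed subgroup to overlapping subsets. You also never justify the final step that the resulting $\sigma\in\Aut(N)$ would satisfy $\sigma(K)\subseteq K'$. Finally, your explanation that the quadratic bound $l>[N:\Qz]^2$ comes from ``invertibility of both averaging operations modulo $l$'' does not match: each averaging is over a subgroup of order at most $[N:\Qz]$, so that would only require a linear bound.

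The paper's proof sidesteps the extension problem by working with primes rather than characters. The divisibility condition is first converted (Lemmas~\ref{lemma:multiplicative_properties_of_phi}--\ref{lemma:phi_intersection_nonempty}) into a map $\phi$ from degree-one primes of $K$ to primes of $K'$ over the same rational prime with $\psi(\chi)(\phi(\mfp)) = \chi(\mfp)^{f_{\phi(\mfp)}}$; the induction there uses a pigeonhole argument requiring only $l>[K':\Qz]$. Then one shows (Lemma~\ref{lemma:hom_version_galois}) that a positive-density portion of $\im(\phi)$---of density at least $1/[N:\Qz]^2$---is compatible with a single $\sigma\in\Gal(N/\Qz)$, and the Chebotarev density theorem together with $l>[N:\Qz]^2$ forces $\psi$ to agree with the map induced by $\sigma$. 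That is where the quadratic bound genuinely enters: as a density threshold, not as an invertibility condition. Lemma~\ref{lemma:KinK'} then gives $K_\tau\subseteq K'$ and finishes; it also handles injectivity cleanly (Lemma~\ref{lemma:Omega1_injective}), where your sketch is plausible but left at the level of a Chebotarev heuristic.
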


(This map is denoted by $\Omega_1$ in later sections.)

One might wonder whether Theorem~\ref{theoremA:bijection_of_homs} holds for characters of small order compared to the degrees of the number fields involved. This is not the case, as we show in Section~\ref{section:counterexamples}. Therefore Theorem~\ref{theoremA:bijection_of_homs} does not make the Main Theorem of \cite{GDTHJS} obsolete.

For the proof of Theorem~\ref{theoremA:bijection_of_homs} we create an injective map between the prime ideals of the number field; we show that for any $\psi \in \HomLl$ there exists a map $\phi$ from almost all primes of $K$ to the primes of $K'$ for which
\[
\chi(\mfp)^{f_{\phi(\mfp)}} = \psi(\chi)(\phi(\mfp)),
\]
where $f_{\phi(\mfp)}$ is the inertia degree of $\phi(\mfp)$. This is different than the proof of \cite[Thm.\ A]{GDTHJS}; we cannot expect this map $\phi$ to be norm-preserving, nor bijective, nor unique (unless $K$ and $K'$ are isomorphic). As a result, showing that this results in a homomorphism $K \to K'$ (see Theorem~\ref{theorem:HomP_to_Homfield}) is done differently than the construction of the isomorphism $K \to K'$ in the proof of \cite[Thm.\ A]{GDTHJS}. It relies heavily on the assumption that $l$ is a large enough prime.

It is unknown to the author whether there exists a bijection between $\HomL$ and $\Homfield$ or not, but Theorem~\ref{theoremA:bijection_of_homs} has the following as an immediate corollary:

\begin{corollarystar}
Let $K, K'$ be number fields. Suppose there exists a group homomorphism $\psi\colon \Hkab \to \Hkabx$ such that $L(\chi, s)$ divides $L(\psi(\chi), s)$ for every character $\chi \in \Hkab$. Then there exists a homomorphism $K \to K'$.
\end{corollarystar}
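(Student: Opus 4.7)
The plan is to deduce the corollary directly from Theorem~\ref{theoremA:bijection_of_homs} by restricting the given global homomorphism $\psi$ to an $l$-torsion subgroup for suitably large prime $l$. Let $N$ denote the common Galois closure of $K$ and $K'$, and fix any prime $l$ with $l > [N:\Qz]^2$ (such primes exist by the infinitude of primes).

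First I would observe that any group homomorphism $\psi \colon \Hkab \to \Hkabx$ automatically sends $l$-torsion into $l$-torsion: if $\chi^l = 1$, then $\psi(\chi)^l = \psi(\chi^l) = \psi(1) = 1$. Thus $\psi$ restricts to a group homomorphism $\psi_l \colon \Hkab[l] \to \Hkabx[l]$.

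Next, the divisibility hypothesis $L(\chi, s) \mid L(\psi(\chi), s)$ is assumed for \emph{every} $\chi \in \Hkab$, so in particular it holds for every $\chi \in \Hkab[l]$. Therefore $\psi_l$ lies in $\HomLl$. Applying Theorem~\ref{theoremA:bijection_of_homs}, the natural map $\Omega \colon \Homfield \to \HomLl$ is a bijection; in particular it is surjective, so there exists an element of $\Homfield$ whose image under $\Omega$ is $\psi_l$. This gives the required homomorphism $K \to K'$.

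There is no real obstacle beyond invoking Theorem~\ref{theoremA:bijection_of_homs}; the only mild subtlety is to ensure that the hypothesis of that theorem, namely $l > [N:\Qz]^2$, can be met, which is immediate since we are free to pick $l$ as large as we like. Note that this argument does not claim to identify a canonical homomorphism $K \to K'$ associated to $\psi$, nor does it assert anything about the restrictions $\psi_l$ for different primes $l$ yielding the same field homomorphism; only the \emph{existence} of some homomorphism $K \to K'$ is needed for the corollary.
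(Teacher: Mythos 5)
Your proposal is correct and matches the paper's (implicit, one-line) argument: restrict $\psi$ to the $l$-torsion for a prime $l > [N:\Qz]^2$, note that a group homomorphism carries $l$-torsion into $l$-torsion and that the divisibility hypothesis is inherited, and then apply the surjectivity of $\Omega$ from Theorem~\ref{theoremA:bijection_of_homs}. This is exactly why the paper labels the statement an immediate corollary.
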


Moreover, we give a generalisation of \cite[Theorem 10.1]{CdSLMS}, proving that a single $L$-series can characterise the number field, even when division is allowed:

\begin{theoremA}\label{theoremA:existence}
Let $K$ be a number field. For any $l \geq 3$ there exists a character $\chi \in \Hkab[l]$ such that if for any number field $K'$ and any character $\chi' \in \Hkabx$ we have that $L(\chi',s)$ divides $L(\chi, s)$, then $K \simeq K'$.
\end{theoremA}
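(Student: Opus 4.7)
The plan is to choose $\chi$ so that the hypothesis $L(\chi',s) \mid L(\chi,s)$ yields a prime correspondence rigid enough to build a field homomorphism $K' \to K$, which we then show is an isomorphism.

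I would start by invoking \cite[Thm.\ 10.1]{CdSLMS} to obtain a character $\chi_0 \in \Hkab[l]$ for which $L(\chi_0,s) = L(\chi',s)$ already forces $K \simeq K'$, then refine $\chi_0$ to a character $\chi$ of order exactly $l$ whose Frobenius values at unramified primes are sufficiently generic: for almost every rational prime $p$, the multiset $\{(\chi(\mfp), f_\mfp)\}_{\mfp \mid p}$ admits a \emph{unique} decomposition into sub-Euler-factors. The local factor divisibility $L_p(\chi',T) \mid L_p(\chi,T)$ at each such $p$ then produces, by matching irreducible factors over $\Zz[\zeta_l][T]$, an injection $\phi_p$ from the primes of $K'$ above $p$ to the primes of $K$ above $p$ satisfying $\chi(\phi_p(\mfp'))^{f_{\mfp'}} = \chi'(\mfp')^{f_{\phi_p(\mfp')}}$, exactly the relation arising from field homomorphisms in Lemma~\ref{lemma:iota_in_HomL}.

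With the prime correspondence $\phi = (\phi_p)_p$ in hand, I would adapt the argument of Theorem~\ref{theorem:HomP_to_Homfield} (with the roles of $K$ and $K'$ exchanged) to produce a field homomorphism $\varphi\colon K' \to K$, since the essential input of that theorem is precisely a $\cP$-compatible prime correspondence. The homomorphism $\varphi$ realises $K'$ as a subfield of $K$, so $[K':\Qz] \mid [K:\Qz]$; comparing the degrees of $L_p(\chi,T)$ and $L_p(\chi',T)$ at a rational prime $p$ of positive Chebotarev density that splits completely in $K$ and at which $\chi$ is unramified forces $[K':\Qz] = [K:\Qz]$, whence $\varphi$ is an isomorphism.

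The main obstacle will be bridging the gap between a \emph{single}-character divisibility and the group-theoretic input required by the constructions underlying Theorem~\ref{theoremA:bijection_of_homs}, which take an entire $\psi \in \HomLl$. The genericity of $\chi$ is what makes this possible: Chebotarev density ensures that outside a thin set of primes, the multiset $\{\chi(\mfp)\}_{\mfp \mid p}$ already determines the splitting type of $p$ in $K$, so the single divisibility $L(\chi',s) \mid L(\chi,s)$ carries enough information to recover the full prime ideal structure of $K'$ relative to $K$. Executing this genericity rigorously---in particular, making the sub-Euler-factor decomposition unique and controlling the finite set of ``bad'' primes where it may fail---is where the argument requires its most delicate choices.
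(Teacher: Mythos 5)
Your proposal diverges substantially from the paper's proof, and the route you sketch has a gap that I do not see how to close with the tools you invoke.

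The paper proves Theorem~\ref{theoremA:existence} by first establishing Theorem~\ref{theorem:dividing_L_series_implies_equal_degree}: for $\widetilde{\chi}$ the character from \cite[Thm.~10.1]{CdSLMS}, divisibility $L(\chi',s) \mid L(\widetilde{\chi},s)$ already forces $[K':\Qz] = [K:\Qz]$. This is done purely representation-theoretically: one passes to $\rho = \mathrm{Ind}(\widetilde{\chi})$ and $\rho' = \mathrm{Ind}(\chi')$, shows $\rho'$ factors through $\Gal(M/\Qz) \simeq C^n \rtimes G$, and analyzes the eigenvalues of $\rho'(\alpha_i)$ for the elements $\alpha_i = (1,\dots,\zeta,\dots,1,e)$. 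Using \cite[Lemma~6.6]{CdSLMS} each $\rho'(\alpha_i)$ is diagonal with exactly one $\zeta$-eigenvalue, and these entries sit in pairwise distinct positions, giving $n' \geq n$. Together with the trivial $n' \leq n$ from degrees of local factors, $n' = n$, so the local factors agree at almost all primes, so the $L$-series coincide, and \cite[Thm.~10.1]{CdSLMS} gives $K \simeq K'$. No field homomorphism is ever constructed, and this is what allows the theorem to hold for all $l \geq 3$.

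Your plan instead tries to build a field homomorphism $K' \to K$ from the divisibility, by extracting a prime correspondence and feeding it into (a reflected version of) Theorem~\ref{theorem:HomP_to_Homfield}. There are three problems. First, Theorem~\ref{theorem:HomP_to_Homfield} takes as input an element of $\HomPld$, i.e.\ a group homomorphism of the \emph{entire} $l$-torsion character group together with a prime correspondence compatible with \emph{all} characters; a single pair $(\chi,\chi')$ gives no such homomorphism, and the proof of Theorem~\ref{theorem:HomP_to_Homfield} essentially quantifies over all $\chi$ in applying Chebotarev to kill $(\psi_\sigma \circ \iota_{K,K'})\psi^{-1}(\chi)$. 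Second, even if you had a suitable $\psi$, that theorem requires $l > [N:\Qz]^2$, while Theorem~\ref{theoremA:existence} is supposed to hold for every prime $l \geq 3$; Section~\ref{section:counterexamples} shows the constraint on $l$ there is not removable, so you cannot simply invoke the machinery. Third, your closing degree argument does not work: at a prime splitting completely in $K$ one has $\deg L_p(\chi,T) = n$ and $\deg L_p(\chi',T) = n'$ with $n' \leq n$ from divisibility, and a homomorphism $K' \hookrightarrow K$ gives $n' \mid n$ --- neither forces $n' = n$. Equality of degrees is exactly the hard content of the paper's argument, and it is obtained through the explicit eigenvalue bookkeeping, not from a prime correspondence.

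The ``unique sub-Euler-factor decomposition'' you posit is also more delicate than stated: the factors $1 - \chi(\mfp)T^{f_\mfp}$ are not irreducible over $\Qz[T]$ (or even over $\Zz[\zeta_l][T]$ once $f_\mfp > 1$), so a divisor of $L_p(\chi,T)$ need not be a product of whole local factors $L_\mfp$, and making this matching unambiguous for a single well-chosen $\chi$ would itself require a nontrivial genericity argument that is not supplied. The representation-theoretic route avoids this entirely by working with conjugacy classes $\alpha_i$ where the eigenvalue pattern is completely controlled.
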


Note that as a corollary the $L$-series $L(\chi,s)$ and $L(\chi',s)$ are equal.

A natural follow-up question to this theorem that is not dealt with in this article is obtained by exchanging the role of $\chi$ and $\chi'$: does there exist a character $\chi \in \Hkab$ such that whenever there is a number field $K'$ and $\chi' \in \Hkabx$ such that $L(\chi, s)$ divides $L(\chi', s)$, the set $\Homfield$ is necessarily nonempty?

\section{Preliminaries}
Throughout the entire paper, we fix an algebraic closure $\overline{\Qz}$ of $\Qz$. By $K, K'$ and $N$ we denote number fields, where $N/\Qz$ is a Galois extension. We use $\mfp$ for the prime ideals of $K$ (henceforth we call prime ideals ``primes''), $\mfq$ for the primes of $K'$, and $\mfP$ for the primes of $N$. 

The set of primes of a number field $K$ is denoted $\cP_{K}$, and we denote the set of unramified primes of inertia degree one by $\pks$. We denote the inertia degree of $\mfp \in \cP_K$ by $f_{\mfp}$.

For a field $K$, let $G_K$ be its absolute Galois group, $K^\textrm{ab}$ be the composite of all its abelian extensions, and denote by $G_K^{\textrm{ab}}$ the Galois group of $K^\textrm{ab}/K$. The dual, denoted $\Hkab$, is the group of all Dirichlet characters (i.e.\ continuous homomorphisms) $G_K \to \Cz^\times$. For any prime number $l$ we denote by $\Hkab[l]$ the subgroup of $\Hkab$ generated by characters of order $l$.

\subsection*{Dirichlet characters} \leavevmode \\
For every Dirichlet character $\chi \in \Hkab$ there exists a unique finite cyclic extension $K_{\chi}/K$ of degree equal to the order of $\chi$ such that $\chi$ factors through $\Gal(K_{\chi}/K)$. Let $\mfp$ be a prime of $K$ unramified in $K_{\chi}/K$ and let $\Frob{\mfp}$ be the Frobenius element in $\Gal(K_{\chi}/K)$. We set $\chi(\mfp) := \chi(\Frob{\mfp})$. If $\chi$ has order $l$, then $\chi(\mfp)$ is an $l^\textup{th}$ root of unity. If $\mfp$ is a prime of $K$ that ramifies in $K_{\chi}/K$, we set $\chi(\mfp) = 0$.

\subsection*{The $L$-series of a Dirichlet character}
We give a brief definition of the $L$-series and formally state the definition of $L_p(\chi, T)$ and ``dividing'' $L$-series.
\begin{definition}
Associated to any Dirichlet character $\chi \in \Hkab$ is an $L$-series $L(\chi, s)$, defined by
\[
L(\chi, s) = \prod_{\mfp \in \cP_K} \big(1 - \chi(\mfp) p^{- f_{\mfp} s}\big)^{-1}.
\]
Here $s$ denotes a complex variable. $L(\chi, s)$ converges for any $s \in \mathbb{C}$ with $\textup{Re}(s) > 1$. 
\end{definition}

\begin{definition}\label{definition:local factor at p}
Denote $L_\mfp(\chi, T) := 1 - \chi(\mfp) T$. We define the \emph{local factor at $p$} as
\[
L_p(\chi, T) := \prod_{\substack{\mfp \in \cP_{K}\\ \mfp \mid p}} L_\mfp(\chi, T^{f_{\mfp}}).
\]
This is a polynomial in $T$ of degree at most $[K:\Qz]$. 
\end{definition}

\begin{definition}
Let $K, K'$ be number fields, and let $\chi \in \Hkab$, $\chi' \in \Hkabx$ be characters. We say that $L(\chi, s)$ \emph{divides} $L(\chi', s)$ if the polynomial $L_p(\chi, T)$ divides $L_p(\chi', T)$.
\end{definition}

\begin{remark}
There seems to be no clear analytic condition equivalent to the condition that $L(\chi,s)$ divides $L(\chi', s)$. For example, in Corollary~\ref{corollary:div_char} a character $\chi_N$ is constructed such that $\zeta_{\Qz}(s)$ divides $L(\chi_N, s)$. However, $\zeta_{\Qz}(s)$ has a pole at $s = 1$, while $L(\chi_N, s)$ does not.
\end{remark}

\section{Main theorem}
We create bijections between three sets of homomorphisms:
\begin{definition}\label{definition:hom_sets}
\mbox{ }
\begin{itemize}
\item $\HomLl$ is the set of homomorphisms $\psi\colon \Hkab[l] \to \Hkabx[l]$ for which $L_p(\chi, T)$ divides $L_p(\psi(\chi), T)$ for all rational primes $p$.
\item $\HomPld$ is the set of homomorphisms $\psi\colon \Hkab[l] \to \Hkabx[l]$ for which there exists a set $\cS \subseteq \pks$ of (Dirichlet) density one and a map $\phi\colon \pks \to \pkx$ such that $\mfp \cap \Zz = \phi(\mfp) \cap \Zz$ and
\[
\psi(\chi)(\phi(\mfp)) = \chi(\mfp)^{f_{\phi(\mfp)}}
\]
for any $\mfp \in \cS$ and all $\chi$ unramified at $\mfp$. We say that $\cS$ and $\phi$ are \emph{associated} to $\psi$. 
\item $\Homfield$ is the set of field homomorphisms $K \to K'$.
\end{itemize}
\end{definition}

\begin{theorem}\label{theorem:homs_in_bijection}
Let $K, K'$ be number fields, let $N$ be the Galois closure of the composite of $K$ and $K'$ and let $l > [N:\Qz]^2$ be a prime number. There exist injective maps
\begin{equation*}
\begin{tikzcd}
\HomLl \arrow{rr}{\Omega_2}&  & \arrow[dl, "\Omega_3"] \HomPld \\
 & \arrow{ul}{\Omega_1}\Homfield &  
\end{tikzcd}
\end{equation*}
such that $\Omega_3 \circ \Omega_2 \circ \Omega_1$ is the identity on $\Homfield$. As a result, the sets of homomorphisms are all in bijection.
\end{theorem}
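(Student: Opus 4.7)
My plan is to construct each of $\Omega_1$, $\Omega_2$, $\Omega_3$, verify the injectivity of each, and check that the composition $\Omega_3 \circ \Omega_2 \circ \Omega_1$ is the identity on $\Homfield$. The bijectivity of all three then follows formally.

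The map $\Omega_1$ is dualization of restriction. A field homomorphism $\iota\colon K \hookrightarrow K'$ lifts, after fixing compatible embeddings into $\overline{\Qz}$, to a continuous restriction $G_{K'} \to G_K$, whose transpose on $l$-torsion characters is $\Omega_1(\iota)\colon \Hkab[l] \to \Hkabx[l]$. The divisibility $L_p(\chi, T) \mid L_p(\Omega_1(\iota)(\chi), T)$ is the content of Lemma~\ref{lemma:iota_in_HomL}, so $\Omega_1(\iota) \in \HomLl$. Injectivity is immediate: distinct lifts $\iota$ induce distinct restriction maps on Galois groups, and these are separated by $l$-torsion characters once $l$ exceeds $[N:\Qz]$.

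For $\Omega_2$, fix $\psi \in \HomLl$. For each $\mfp \in \pks$ above a rational prime $p$ and each $\chi \in \Hkab[l]$ unramified at $\mfp$, the hypothesis gives
\[
1 - \chi(\mfp) T \,\Bigm|\, \prod_{\mfq \mid p} \bigl(1 - \psi(\chi)(\mfq) T^{f_\mfq}\bigr).
\]
Factoring each local factor over $\overline{\Qz}$ as $\prod_{\zeta^{f_\mfq} = \psi(\chi)(\mfq)} (1 - \zeta T)$ shows that for each $\chi$ there must be some $\mfq = \mfq(\chi) \mid p$ with $\chi(\mfp)^{f_\mfq} = \psi(\chi)(\mfq)$. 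The crux is to select a single $\phi(\mfp) = \mfq$ that works for all $\chi$. For each prime $\mfq$ above $p$, the set $S_\mfq$ of characters satisfying this relation is an $\Fz_l$-subspace of the (infinite-dimensional) $\Fz_l$-vector space of $l$-torsion characters unramified at $\mfp$, and the at-most-$[N:\Qz]$ such subspaces cover the whole space. Since an $\Fz_l$-vector space of dimension $\geq 2$ cannot be covered by fewer than $l+1$ proper subspaces, the bound $l > [N:\Qz]^2$ forces some $S_\mfq$ to equal the whole group, yielding $\phi(\mfp)$. The density-zero set of primes ramifying in $N/\Qz$ is removed to form $\cS$. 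Injectivity of $\Omega_2$ follows since, by Chebotarev density, $\psi(\chi)$ is determined by its values on a density-one set of primes.

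The principal obstacle is $\Omega_3$: from the combinatorial data $(\cS, \phi)$ one must reconstruct a field homomorphism $K \to K'$, which corresponds to a coset $\sigma \Gal(N/K)$ in $\Gal(N/\Qz)$ with $\sigma \Gal(N/K') \sigma^{-1} \subseteq \Gal(N/K)$. The identity $\psi(\chi)(\phi(\mfp)) = \chi(\mfp)^{f_{\phi(\mfp)}}$ translates, via class field theory and induction of characters from $K$ and $K'$ up to $N$, into a Frobenius conjugation constraint in $\Gal(N/\Qz)$ between compatible lifts of $\mfp$ and $\phi(\mfp)$. Applying Chebotarev density over $\cS$ lets one measure, for each candidate double coset in $\Gal(N/K) \backslash \Gal(N/\Qz) / \Gal(N/K')$, the density of primes consistent with that coset; the rigidity forced by $l > [N:\Qz]^2$ leaves exactly one double coset consistent with the data, and this produces $\sigma$. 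The examples in Section~\ref{section:counterexamples} show that this largeness condition is essential: for small $l$ the residual mod-$l$ Frobenius information is too coarse to pin down a unique coset. Injectivity of $\Omega_3$ follows because distinct $\sigma$ impose distinct prime-to-prime correspondences on almost all $\mfp$.

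Finally, tracking $\iota \in \Homfield$ through the construction shows that the $\phi(\mfp)$ produced by $\Omega_2 \circ \Omega_1$ is the natural prime of $K'$ associated to $\iota(\mfp)$ and the distinguished coset produced by $\Omega_3$ is the one represented by $\iota$; hence $\Omega_3 \circ \Omega_2 \circ \Omega_1 = \mathrm{id}_{\Homfield}$. Combined with the injectivity of each $\Omega_i$, this identity makes $\Omega_3$ surjective and therefore bijective; then $\Omega_2 \circ \Omega_1$ is bijective, so the injectivity of $\Omega_2$ forces $\Omega_1$ and $\Omega_2$ to be bijections as well.
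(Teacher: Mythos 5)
Your constructions of $\Omega_1$ and $\Omega_2$ are sound, and for $\Omega_2$ you use a genuinely different (and arguably cleaner) argument than the paper: you observe that for a fixed $\mfp\in\pks$ the set $S_\mfq$ of $l$-torsion characters $\chi$ (unramified at $\mfp$) satisfying $\psi(\chi)(\mfq)=\chi(\mfp)^{f_\mfq}$ is an $\Fz_l$-subspace, and that these finitely many subspaces cover the whole character space, so by the classical fact that an $\Fz_l$-vector space cannot be the union of $\le l$ proper subspaces, one $S_\mfq$ must be everything. The paper instead runs an induction with a pigeonhole step (Lemmas~\ref{lemma:multiplicative_properties_of_phi}--\ref{lemma:phi_intersection_nonempty}): it intersects the sets $\varphi(\chi_1^j\chi_2,\mfp)\cap\varphi(\chi_3,\mfp)\cap\cdots$, finds $j\neq k$ with nonempty overlap among at most $[K':\Qz]$ primes, and untangles $\varphi(\chi_1,\mfp)$ and $\varphi(\chi_2,\mfp)$ from $\varphi(\chi_1^{k-j},\mfp)$. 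Your subspace-covering argument buys you the same conclusion with less bookkeeping and only needs $l>[K':\Qz]$, the same hypothesis the paper uses for this step.

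The genuine gap is in $\Omega_3$, which is the heart of the theorem, and your sketch does not constitute a proof. You assert that the Frobenius constraint together with Chebotarev density singles out ``exactly one double coset'' in $\Gal(N/K)\backslash\Gal(N/\Qz)/\Gal(N/K')$ because of ``the rigidity forced by $l>[N:\Qz]^2$,'' but you never say what mechanism makes a single coset emerge, nor why $[N:\Qz]^2$ is the relevant threshold. The paper's proof of Theorem~\ref{theorem:HomP_to_Homfield} has real structure that your sketch omits: it first reduces to the case $K\subseteq K'$ with $K'/\Qz$ Galois (Lemma~\ref{lemma:hom_version_galois}); then it partitions $\im(\varphi)$ according to which $\sigma\in\Gal(K'/\Qz)$ moves $\varphi(\mfp)$ over $\mfp$, shows by counting that some piece $\im(\varphi)_\sigma$ has lower density exceeding $1/[K':\Qz]^2$, and uses Chebotarev to conclude that the order-$l$ character $(\psi_\sigma\circ\iota_{K,K'})\psi^{-1}(\chi)$ is trivial because it takes the value $1$ on a set whose density exceeds $1/l$; finally it descends from $N$ to $K'$ via Lemma~\ref{lemma:KinK'}. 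The crucial quantitative point --- that a nontrivial character of order $l$ takes value $1$ on a set of density only $1/l$, so $l>[K':\Qz]^2$ forces triviality once a piece of density $>1/[K':\Qz]^2$ is found --- is exactly what your ``rigidity'' hand-wave needs to become, and without it the double-coset count you allude to cannot be carried out. You should also be aware that the reduction to the Galois case and the descent step (Lemma~\ref{lemma:KinK'}) are not cosmetic: they are where $N$ and the bound $[N:\Qz]^2$ actually enter.
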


\begin{remark}
Because the natural map between $\Homfield$ and $\HomPld$ is a bijection, for any $\psi \in \HomPld$ there is in fact a map $\phi\colon \pk \to \pkx$ (that is, defined on all primes of $K$) such that
\[
\psi(\chi)(\phi(\mfp)) = \chi(\mfp)^{f_{\phi(\mfp)}}
\]
for any $\mfp \in \pk$ and all $\chi$ unramified at $\mfp$.
\end{remark}

The remainder of the article consists of five sections: Section~\ref{section:comparison} briefly compares the recently introduced divisibility condition with other notions for the Dedekind zeta function that arise naturally from considering arithmetic equivalence. Section~\ref{section:iota} concerns some properties of the restriction map of characters induced from homomorphisms of fields. The results from this section are applied in Section~\ref{section:proofs}, where the main theorem is proven. In Section~\ref{section:counterexamples} we show that the main theorem does not hold if we remove the lower bound on $l$. Lastly, in Section~\ref{section:single} we prove that there exists $L$-series that cannot be properly divided by other $L$-series.

\section{Comparison of the divisibility condition}\label{section:comparison}
Let $K$ and $K'$ be number fields and let $N$ be a Galois extension containing both fields. Let $G = \Gal(N/\Qz)$, $H = \Gal(N/K)$ and $H' = \Gal(N/K')$, and denote $\rho = \textrm{Ind}^G_H(\mathbf{1}_H)$ and $\rho' = \textrm{Ind}^G_{H'}(\mathbf{1}_{H'})$.  The following properties are equivalent, see \cite[Section 1]{perlis}:
\begin{itemize}
\item $\zeta_K(s) = \zeta_{K'}(s)$;
\item $|c^G \cap H| = |c^G \cap H'|$ for every conjugacy class $c^G$;
\item $\text{charpoly}(\rho(c^G))= \text{charpoly}(\rho'(c^G))$ for every conjugacy class $c^G$;
\item $\rho \simeq \rho'$.
\end{itemize}

If $K'$ contains $K$ then $H'$ is a subgroup of $H$ and the following properties hold:
\begin{enumerate}
\item $\zeta_K(s)$ divides $\zeta_{K'}(s)$;
\item \label{enum:1} $|c^G \cap H| \geq |c^G \cap H'|$ for every conjugacy class $c^G$;
\item \label{enum:2} $\text{charpoly}(\rho(c^G)) \mid \text{charpoly}(\rho'(c^G))$ for every conjugacy class $c^G$;
\item \label{enum:3} $\rho$ is a subrepresentation of $\rho'$.
\end{enumerate}
Note that the first and third property are equivalent, as $\zeta_K(s) = L(1_H, s)$, and $L_p(1_H, T) = \text{charpoly}(\rho(c^G))$. 

Any of these properties might also occur when $H'$ is not a subgroup of $H$, and one might wonder whether any implications hold between these properties. Certainly property (\ref{enum:3}) implies property (\ref{enum:2}), as the characteristic polynomial of a representation is the product of the characteristic polynomials of its decomposition. However, none of the other implications hold, as the following examples show:
\begin{itemize}
\item (\ref{enum:2}) $\not\Rightarrow$ (\ref{enum:3}). Let $G = S_4$, $H = A_4$, and $H' = \langle (12), (123) \rangle \simeq S_3$. The representation $\rho$ decomposes as the sum of the trivial and the sign representation, whilst $\rho'$ is the direct sum of the trivial and the standard representation, which we can check using Frobenius reciprocity:
\[
\langle \chi_{\rho'}, \chi_\text{standard} \rangle_G = \langle 1_{H'}, \text{Res}(\chi_\text{standard}) \rangle_{H'} = \frac12 (1 \cdot 3 + 1 \cdot -1) = 1.
\]
We can also write down all characteristic polynomials:
\begin{center}
\begin{tabular}{ l |r| r }
  $c^G$ & $\text{charpoly}(\rho(c^G))$ & $\text{charpoly}(\rho'(c^G))$ \\ \hline
  $()$ & $(1-T)^4$ & $(1-T)^2$ \\
  $(..)$ & $(1-T)^2(1-T^2)$ & $(1-T^2)$\\
  $(...)$ & $(1-T^2)^2$ & $(1-T)^2$ \\
  $(..)(..)$ & $(1-T)(1-T^3)$ & $(1-T)^2$\\
  $(....)$ & $(1-T^4)$ & $(1-T^2)$
\end{tabular}
\end{center}
Note that $\text{charpoly}(\rho'(c^G))$ divides $\text{charpoly}(\rho(c^G))$ for all conjugacy classes $c^G$.

\item (\ref{enum:3}) $\not\Rightarrow$ (\ref{enum:1}). Let $G = S_4$, $H = \langle (12), (123) \rangle \simeq S_3$, and $H' = \langle (12)(34) \rangle \simeq C_2$. This triple does not have property (\ref{enum:1}) because $H$ does not contain any element of the conjugacy class $(..)(..)$. The induced representation $\rho'$ decomposes, as before, into the direct sum of the trivial representation and the standard representation of $S_4$. However, $\rho$ also contains the standard representation: 
\[
\langle \chi_\rho, \chi_\text{standard} \rangle_G = \langle 1_H, \text{Res}(\chi_\text{standard}) \rangle_H = \frac16 (1 \cdot 3 + 3 \cdot 1 + 2 \cdot 0) = 1.
\]
Hence property (\ref{enum:3}) holds for this choice of $G$, $H$, and $H'$.
\item (\ref{enum:1}) $\not\Rightarrow$ (\ref{enum:2}). The following counterexample was found using GAP. Let $G$ be the first nonsplit extension of $C_4^2$ by $D_4$ (this group has groupID $128, 134$), i.e.
\begin{align*}
G = \langle a,b,c,d | \,&a^4=b^4=c^4=1,d^2=a^{-1},ab=ba,cac^{-1}=a^{-1}b,ad=da, \\ & cbc^{-1}=a^2b, dbd^{-1}= a^2b^{-1},dcd^{-1}=a^{-1}c^{-1}\rangle.
\end{align*}
Let $H = \langle b^{-2}, ac^2, ad^{-1}c^{-1} \rangle \simeq D_4$ and $H' = \langle ac^2, a^{-1} d c^{-1} a \rangle \simeq C_2 \times C_2$. Both are contained in the union of $e^G, (ac^2)^G, (b^2)^G, (b^3c^3d)^G$, and $(a^2b^3c^3d)^G$, and we have

\begin{center}
\begin{tabular}{ l | c c r r }
  $c^G$ & $|c^G \cap H|$ & $|c^G \cap H'|$ & $\text{charpoly}(\rho(c^G))$ & $\text{charpoly}(\rho'(c^G))$ 
  \\ \hline
  $e^G$ 			& 1 & 1 & $(1-T)^{16}$ 					& $(1-T)^{32}$ \\
  $(ac^2)^G$ 		& 2 & 1 & $(1-T)^4(1-T^2)^6$ 			& $(1-T)^4(1-T^2)^{14}$ \\
  $(b^2)^G$ 		& 1 & 0 & $(1-T)^{16}$ 					& $(1-T^2)^{16}$ \\
  $(b^3c^3d)^G$ 	& 2 & 2 & $(1-T)^4(1-T^2)^6$ 			& $(1-T)^8(1-T^2)^{12}$ \\
  $(a^2b^3c^3d)^G$ 	& 2 & 0 & $(1-T)^4(1-T^2)^6$ 			& $(1-T^4)^8$.
\end{tabular}
\end{center}
Thus property (\ref{enum:1}), but not property (\ref{enum:2}), holds for this choice of $G$, $H$, and $H'$.
\end{itemize}

\section{Properties of the restriction map}\label{section:iota}
An important tool in the proof of the main theorem is the restriction map $\iota$, induced by an inclusion of fields:

\begin{definition}
Let $K, K'$ be number fields for which $K \subseteq K'$. Let $\iota_{K, K'}\colon \Hkab \to \Hkabx$ denote the restriction map, i.e. $\iota_{K, K'}(\chi) = \restr{\chi}{G_{K'}}$.
\end{definition}

If a character $\chi \in \Hkab$ is associated to the extension $K_\chi/K$, then $\iota_{K, K'}(\chi)$ is associated to the extension $K'K_\chi/K'$. Moreover, if $\chi$ is unramified at a prime $\mfp$ of $K$, then $\iota_{K, K'}$ is unramified at any prime $\mfP$ of $K'$ lying over $\mfp$, and 
\[
\chi(\mfp)^{f_{\mfP}/f_{\mfp}} = \iota_{K, K'}(\chi)(\mfP).
\]
As a result, $L_{\mfp}(\chi, T^{f_{\mfp}})$ divides $L_{\mfP}(\iota_{K, K'}(\chi), T^{f_{\mfP}})$. If $\chi$ is ramified at $\mfp$, then $L_{\mfp}(\chi, T^{f_{\mfp}}) = 1$, hence $L_{\mfp}(\chi, T^{f_{\mfp}})$ divides $L_{\mfP}(\iota_{K, K'}(\chi), T^{f_{\mfP}})$ as well. Therefore, for any prime $\mfp$ of $K$ and any prime $\mfP$ of $K'$ lying over $\mfp$, $L_{\mfp}(\chi, T^{f_{\mfp}})$ divides $L_{\mfP}(\iota_{K, K'}(\chi), T^{f_{\mfP}})$. This implies the following:

\begin{lemma}\label{lemma:iota_in_HomL}
Let $K, K'$ be number fields such that $K$ is contained in $K'$. Then $\iota_{K, K'} \in \HomL$. \qed
\end{lemma}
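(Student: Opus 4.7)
The plan is to verify the two requirements in the definition of $\HomL$: that $\iota_{K,K'}$ is a group homomorphism, and that for every $\chi \in \Hkab$ and every rational prime $p$ we have $L_p(\chi, T) \mid L_p(\iota_{K,K'}(\chi), T)$. The first point is essentially formal, since restriction of continuous homomorphisms $G_K \to \Cz^\times$ to the subgroup $G_{K'}$ preserves pointwise multiplication, so $\iota_{K,K'}(\chi_1 \chi_2) = \iota_{K,K'}(\chi_1) \iota_{K,K'}(\chi_2)$. All the content is in the divisibility, and that content is already extracted in the paragraph immediately preceding the statement; what remains is to assemble the local statements into the global one.

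Fix $\chi \in \Hkab$ and a rational prime $p$. Using Definition~\ref{definition:local factor at p} I would expand
\[
L_p(\chi, T) = \prod_{\mfp \mid p} L_\mfp(\chi, T^{f_\mfp}), \qquad L_p(\iota_{K,K'}(\chi), T) = \prod_{\mfP \mid p} L_\mfP(\iota_{K,K'}(\chi), T^{f_\mfP}),
\]
and then partition the primes of $K'$ above $p$ according to the prime of $K$ they lie over, so that
\[
L_p(\iota_{K,K'}(\chi), T) = \prod_{\mfp \mid p}\, \prod_{\mfP \mid \mfp} L_\mfP(\iota_{K,K'}(\chi), T^{f_\mfP}).
\]
For each $\mfp \mid p$ there is at least one prime $\mfP$ of $K'$ lying over it, and the paragraph preceding the lemma shows $L_\mfp(\chi, T^{f_\mfp}) \mid L_\mfP(\iota_{K,K'}(\chi), T^{f_\mfP})$ for every such $\mfP$ (splitting into the unramified case, where $\chi(\mfp)^{f_\mfP/f_\mfp} = \iota_{K,K'}(\chi)(\mfP)$ yields the divisibility directly, and the ramified case, where $L_\mfp(\chi, T^{f_\mfp}) = 1$). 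Hence each inner product is divisible by $L_\mfp(\chi, T^{f_\mfp})$, and multiplying over $\mfp \mid p$ gives $L_p(\chi, T) \mid L_p(\iota_{K,K'}(\chi), T)$.

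There is no real obstacle here: the only non-formal input is the compatibility $\chi(\mfp)^{f_\mfP/f_\mfp} = \iota_{K,K'}(\chi)(\mfP)$, which follows from the fact that for $\mfP \mid \mfp$ unramified in $K'K_\chi/K'$ the Frobenius $\Frob{\mfP}$ in $\Gal(K'K_\chi/K')$ restricts to $\Frob{\mfp}^{f_\mfP/f_\mfp}$ in $\Gal(K_\chi/K)$ under the natural injection $\Gal(K'K_\chi/K') \hookrightarrow \Gal(K_\chi/K)$. This is precisely the statement already recorded before the lemma, so I would simply cite it and note that the lemma follows by taking the product of the local divisibilities over all $\mfp \mid p$ and then over all $p$.
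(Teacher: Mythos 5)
Your proof is correct and follows the same route as the paper: the paper establishes the local divisibility $L_{\mfp}(\chi, T^{f_{\mfp}}) \mid L_{\mfP}(\iota_{K,K'}(\chi), T^{f_{\mfP}})$ for every $\mfp$ and every $\mfP \mid \mfp$ in the paragraph preceding the lemma (handling unramified and ramified cases exactly as you describe) and then asserts the lemma with \qed, leaving the assembly implicit. You simply make the final step explicit by partitioning the primes of $K'$ above $p$ according to the prime of $K$ below and multiplying the local divisibilities, which is the intended argument.
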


We end this section with two lemmas that will be useful in the next section.

\begin{lemma}\label{lemma:iota_injective}
For any prime $l > [K':K]$ the map $\iota_{K, K'}\colon \Hkab[l] \to \Hkabx[l]$ is injective.
\end{lemma}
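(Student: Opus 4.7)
The plan is to show that the kernel of $\iota_{K,K'}$ restricted to $\Hkab[l]$ is trivial. Since $l$ is prime, the subgroup $\Hkab[l]$, being the $l$-torsion of $\Hkab$, consists precisely of characters whose order is either $1$ or $l$. So I would reduce the injectivity statement to showing that no nontrivial character of order $l$ lies in the kernel.

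Let $\chi \in \Hkab[l]$ be nontrivial, so $\chi$ has order exactly $l$. By the preliminaries, there is a unique cyclic extension $K_\chi/K$ of degree $l$ through which $\chi$ factors. By definition, $\iota_{K,K'}(\chi) = \restr{\chi}{G_{K'}}$ is trivial if and only if $G_{K'} \subseteq \ker\chi$, which by the Galois correspondence is equivalent to $K_\chi \subseteq K'$.

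If $K_\chi \subseteq K'$, then $l = [K_\chi : K]$ divides $[K':K]$, contradicting the hypothesis $l > [K':K]$. Therefore $\iota_{K,K'}(\chi)$ is nontrivial, which shows $\iota_{K,K'}$ is injective on $\Hkab[l]$.

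There is no real obstacle here: the argument is a direct Galois-theoretic translation of the hypothesis on $l$. The only point that requires a moment of care is confirming that $\Hkab[l]$, despite being defined as the subgroup \emph{generated} by characters of order $l$, actually coincides with the $l$-torsion (so all its elements have order $1$ or $l$), which holds because $l$ is prime.
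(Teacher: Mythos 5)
Your proof is correct and follows essentially the same route as the paper: translate triviality of $\iota_{K,K'}(\chi)$ into the containment $K_\chi \subseteq K'$ and rule this out using $[K_\chi:K] = l > [K':K]$. The only cosmetic difference is that you invoke divisibility of $[K':K]$ by $l$, whereas the paper simply compares degrees directly; your aside about $\Hkab[l]$ coinciding with the $l$-torsion because $l$ is prime is accurate and harmless.
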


\begin{proof}
The character $\iota_{K, K'}(\chi)$ is trivial precisely when $K_{\chi}$ is contained in $K'$. However, if $\chi$ is not the trivial character, then $[K_{\chi}:K] = l > [K':K]$,  hence $\iota_{K, K'}(\chi)$ is not trivial. Thus $\iota_{K, K'}\colon \Hkab[l] \to \Hkabx[l]$ is injective.
\end{proof}

\begin{lemma}\label{lemma:KinK'}
Let $K, K'$ and $N$ be number fields such that $K, K' \subseteq N$ and $N/\Qz$ is Galois. Let $l > [N: K']$ be a prime number. Assume there exists a map $\alpha\colon \Hkab[l] \to \Hkabx[l]$ such that
\[
\iota_{K',N} \circ \alpha = \iota_{K, N}.
\]
Then $K \subseteq K'$ and $\alpha = \iota_{K, K'}$. 
\end{lemma}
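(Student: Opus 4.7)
The first observation is that Lemma~\ref{lemma:iota_injective}, applied to the pair $K' \subseteq N$ (using $l > [N:K']$), makes $\iota_{K',N}$ injective on $l$-torsion. Hence $\alpha$ is uniquely determined by the relation $\iota_{K',N} \circ \alpha = \iota_{K,N}$. Combined with the transitivity identity $\iota_{K,N} = \iota_{K',N} \circ \iota_{K,K'}$, valid whenever $K \subseteq K'$, this uniqueness immediately forces $\alpha = \iota_{K,K'}$ as soon as $K \subseteq K'$ is established. It therefore suffices to prove the inclusion $K \subseteq K'$.

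My plan is to apply Lemma~\ref{lemma:iota_injective} once more, now to $KK' \subseteq N$: since $l > [N:K'] \geq [N:KK']$, $\iota_{KK',N}$ is also injective on $l$-torsion. For every $\chi \in \Hkab[l]$, the characters $\chi|_{G_{KK'}}$ and $\alpha(\chi)|_{G_{KK'}}$ of $G_{KK'}$ agree after further restriction to $G_N$, hence are equal. Via the Galois correspondence, for every cyclic degree-$l$ extension $L = K_\chi$ of $K$ linearly disjoint from $N$, this yields $L \cdot KK' = K'_{\alpha(\chi)} \cdot KK'$. Using $L \supseteq K$ (so $L \cdot KK' = LK'$) and $L \cap K' \subseteq L \cap N = K$ (forcing $L \cap K' = K \cap K'$), the common field is $LK'$, of degree $l \cdot [K : K \cap K']$ over $K'$, and it contains $K'_{\alpha(\chi)}$ as a cyclic degree-$l$ Galois subextension. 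When $K \subseteq K'$, one has $[LK':K'] = l$, so $K'_{\alpha(\chi)} = LK'$, which coincides with $\iota_{K,K'}(\chi)$, thereby recovering $\alpha = \iota_{K,K'}$.

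For the remaining case $K \not\subseteq K'$, the aim is to derive a contradiction by exhibiting a single cyclic degree-$l$ extension $L/K$ linearly disjoint from $N$ for which $LK'/K'$ admits no cyclic Galois degree-$l$ subextension. For a sufficiently generic choice of $L$, the Galois closure of $LK'$ over $K'$ should have Galois group isomorphic to $(\Zz/l)^{[K:K \cap K']} \rtimes \Gal(KK'/K')$, with $\Gal(KK'/K')$ acting by its natural permutation of the $\Qz$-conjugates of $L$. Because $l > [N:K'] \geq |\Gal(KK'/K')|$, a direct check then shows that in this semidirect product no normal subgroup of index $l$ contains the stabilizer of $LK'$ (this stabilizer is a ``first-coordinate-zero'' hyperplane inside $(\Zz/l)^{[K:K \cap K']}$, which fails to be $\Gal(KK'/K')$-invariant precisely when $K \not\subseteq K'$). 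The main obstacle is producing the requisite generic $L$, for which I plan to use the supply of cyclic degree-$l$ extensions from ray class fields of $K$ combined with a Chebotarev-density argument to enforce the required independence of the $\Qz$-conjugates.
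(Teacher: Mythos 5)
Your reduction in the first paragraph is correct and is exactly the paper's closing step: once $K\subseteq K'$ is known, injectivity of $\iota_{K',N}$ on $l$-torsion (Lemma~\ref{lemma:iota_injective}) together with $\iota_{K,N}=\iota_{K',N}\circ\iota_{K,K'}$ pins down $\alpha=\iota_{K,K'}$.

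For the inclusion $K\subseteq K'$ you take a different route from the paper, and it has a gap that you yourself flag as unresolved: the whole contradiction rests on producing a cyclic degree-$l$ extension $L/K$, linearly disjoint from $N$, for which $\Gal(LN/\Qz)\cong(\Zz/l)^{[K:\Qz]}\rtimes\Gal(N/\Qz)$, so that the conjugates of $L$ over $K'$ are "independent." That is not a routine Chebotarev exercise; it is in substance the content of \cite[Thm.~9.1]{CdSLMS} (a delicate ray-class-group construction, invoked in the paper only later, in Section~\ref{section:single}), and without it the argument does not get started. Two smaller points in the group-theoretic part: the exponent on $\Zz/l$ in the Galois closure of $LK'/K'$ should be $[KK':K']=[\Gal(N/K'):\Gal(N/K)\cap\Gal(N/K')]$, the number of conjugates of $L$ over $K'$, rather than $[K:K\cap K']$ (these differ when $K,K'$ are not linearly disjoint over $K\cap K'$); and the dichotomy "the hyperplane $\{a_1=0\}$ is $P$-invariant iff $K\subseteq K'$" needs a word, namely that $P$ acts transitively on the conjugates, so fixes a point iff there is only one conjugate, which for generic $L$ is equivalent to $\Gal(N/K')\subseteq\Gal(N/K)$. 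By contrast the paper's proof of $K\subseteq K'$ avoids generic extensions entirely: for $p$ split completely in $N$ and $\mfq\mid p$ in $K'$, if two primes $\mfP_i,\mfP_j$ of $N$ over $\mfq$ sat over distinct primes $\mfp_i\neq\mfp_j$ of $K$, a Grunwald--Wang character $\chi\in\Hkab[l]$ with $\chi(\mfp_i)\neq\chi(\mfp_j)$ would force $\iota_{K,N}(\chi)$ to take different values on $\mfP_i,\mfP_j$, while $\iota_{K',N}\circ\alpha(\chi)$ cannot, as it factors through $\mfq$; hence every $\sigma\in\Gal(N/K')$ permutes the primes over each $\mfp$, so restricts to the identity on $K$ by \cite[Lemmas 6.6 and 6.2]{GDTHJS}. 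You should either supply the generic-$L$ existence (say, by citing \cite[Thm.~9.1]{CdSLMS}) and tidy the group theory, or switch to this shorter prime-by-prime argument.
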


\begin{proof}
Let $p$ be a prime that is totally split in $N/\Qz$, and let $\mfq$ be a prime of $K'$ lying over $p$. We show that all primes of $N$ that lie over $\mfq$ lie over the same prime of $K$.

Denote by $\mfP_1, \dots, \mfP_k$ the primes of $N$ lying over $\mfq$. Let $\mfp_i := \mfP_i \cap \cO_{K}$. We argue by contradiction that all $\mfp_i$ are the same prime. Suppose for some $i,j$ we have $\mfp_i \neq \mfp_j$. By the Grunwald-Wang theorem \cite[Ch.\ X, Thm.\ 5]{artintate} there exists a character $\chi \in \Hkab[l]$ for which $\chi(\mfp_i) \neq \chi(\mfp_j)$. 

Any character in the image of $\iota_{K',N} \circ \alpha$ has the same value at all $\mfP_1, \dots, \mfP_k$: we have 
\begin{align*}
(\iota_{K',N} \circ \alpha)(\chi)(\mfP_i) &= \alpha(\chi)(\mfP_i \cap \cO_{K'})\\
										  &= \alpha(\chi)(\mfq)\\
										  &= \alpha(\chi)(\mfP_j \cap \cO_{K'})\\
										  &= (\iota_{K',N} \circ \alpha)(\chi)(\mfP_j),
\end{align*}
whilst 
\[
\iota_{K, N}(\chi)(\mfP_i) = \chi(\mfp_i) \neq \chi(\mfp_j) = \iota_{K, N}(\chi)(\mfP_j),
\]
which contradicts $\iota_{K',N} \circ \alpha = \iota_{K, N}$. Hence every $\mfP_i$ lies over the same prime of $K$.

Let $p$ a prime that is totally split in $N/\Qz$, $\mfp$ a prime of $K$ lying over $p$ and $\mfP$ a prime of $N$ lying over $\mfp$. Denote $\mfq := \mfP \cap \cO_{K'}$, and let $\sigma \in \Gal(N/K')$. The prime $\sigma(\mfP)$ lies over $\mfq$ as $\sigma$ fixes $K'$. By the previous argument, $\sigma(\mfP)$ also lies over $\mfp$. As this holds for any $\mfP$ lying over $\mfp$, $\sigma$ permutes the primes lying over $\mfp$. Using \cite[Lemma 6.6]{GDTHJS}, this implies that $\sigma$ restricts to an automorphism of $K$. As $\restr{\sigma}{K}(\mfp) = \mfp$, from \cite[Lemma 6.2]{GDTHJS} we find that $\sigma$ is the identity on $K$, hence $\sigma \in \Gal(N/K)$. Now $K \subseteq K'$ follows from $\Gal(N/K') \subseteq \Gal(N/K)$. 

Because $K \subseteq K'$, we have $\iota_{K, N} = \iota_{K', N} \circ \iota_{K, K'}$. As $l > [N:K']$, Lemma~\ref{lemma:iota_injective} states that the map $\iota_{K', N}$ is injective, hence we find from $\iota_{K',N} \circ \alpha = \iota_{K, N}$ that $\alpha = \iota_{K, K'}$.
\end{proof}

\section{Proofs}\label{section:proofs}
\subsection{The map $\Omega_1$}
Any $\tau \in \Homfield$ produces a field $K_\tau = \im(\tau) \subseteq K'$ and an isomorphism $\tau\colon K \DistTo K_\tau$. As a result (see also \cite[Sec.\ 6]{GDTHJS}) we have an induced bijection of primes $\tau\colon \cP_{K} \to \cP_{K_\tau}$ and a group isomorphism $\psi_\tau\colon \Hkab \DistTo \Hktauab$ for which 
\[
\psi_\tau(\chi)(\tau(\mfp)) = \chi(\mfp)
\]
for every prime $\mfp$ of $K$ and every $\chi \in \Hkab$. It follows that $L(\chi, s) = L(\psi_\tau(\chi), s)$ for every $\chi \in \Hkab$.

\begin{lemma}\label{lemma:Omega1_injective}
The map $\Omega_1\colon \Homfield \to \HomL$ given by $\tau \mapsto \iota_{K_\tau, K'} \circ \psi_\tau$ is well-defined and injective. 
\end{lemma}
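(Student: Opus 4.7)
The statement has two parts: well-definedness (that $\Omega_1(\tau) \in \HomL$) and injectivity.

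For well-definedness, I would simply chain two facts. The map $\psi_\tau\colon \Hkab \DistTo \Hktauab$ is induced by the field isomorphism $\tau\colon K \DistTo K_\tau$ and transports every prime $\mfp$ to $\tau(\mfp)$ with the same inertia degree, so it preserves every local factor: $L_p(\chi, T) = L_p(\psi_\tau(\chi), T)$ for every rational prime $p$. Since $K_\tau \subseteq K'$, Lemma~\ref{lemma:iota_in_HomL} gives $L_p(\psi_\tau(\chi), T) \mid L_p(\iota_{K_\tau, K'}(\psi_\tau(\chi)), T)$. Composing, $L_p(\chi, T)$ divides $L_p(\Omega_1(\tau)(\chi), T)$, and since both $\psi_\tau$ and $\iota_{K_\tau, K'}$ are group homomorphisms, so is $\Omega_1(\tau)$.

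For injectivity, suppose $\Omega_1(\tau_1) = \Omega_1(\tau_2)$. I would evaluate both sides at a prime $\mfP \in \pkx$ that is totally split in $K'/\Qz$. Setting $\mfq_i := \mfP \cap \cO_{K_{\tau_i}}$ and $\mfp_i := \tau_i^{-1}(\mfq_i)$, the equalities $f_\mfP = f_{\mfq_i} = 1$ reduce the defining formula to
\[
\Omega_1(\tau_i)(\chi)(\mfP) = \psi_{\tau_i}(\chi)(\mfq_i) = \chi(\mfp_i)
\]
for every $\chi \in \Hkab$ unramified at $\mfp_i$. The hypothesis then forces $\chi(\mfp_1) = \chi(\mfp_2)$ for all such $\chi$, and the Grunwald--Wang theorem (characters of $K$ separate distinct primes, as used in Lemma~\ref{lemma:KinK'}) yields $\mfp_1 = \mfp_2$.

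To conclude $\tau_1 = \tau_2$ from this prime-level equality, I would apply Lemma~\ref{lemma:KinK'} to the composite $\alpha := \psi_{\tau_2} \circ \psi_{\tau_1}^{-1}$ (restricted to $\Hktau1ab[l]$ for some prime $l$ exceeding $[N:\Qz]$, where $N$ is a Galois extension of $\Qz$ containing $K'$). Composing the hypothesis with $\iota_{K',N}$ gives $\iota_{K_{\tau_2},N}\circ\alpha = \iota_{K_{\tau_1},N}$, so Lemma~\ref{lemma:KinK'} yields $K_{\tau_1}\subseteq K_{\tau_2}$ and $\alpha = \iota_{K_{\tau_1},K_{\tau_2}}$ on $l$-torsion. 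Swapping the roles of $\tau_1$ and $\tau_2$ gives the reverse inclusion, so $K_{\tau_1} = K_{\tau_2}$. Then $\tau_2\circ\tau_1^{-1}$ is an automorphism of $K_{\tau_1}$ which, by the previous step, fixes a density-one set of primes of $K_{\tau_1}$ (namely the $\tau_1(\mfp)=\tau_2(\mfp)$), and hence is the identity by \cite[Lemma 6.2]{GDTHJS}. Therefore $\tau_1 = \tau_2$.

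\emph{Main obstacle.} Well-definedness is routine. The subtle step is the very last one: deducing equality of the two embeddings from the coincidence of the induced prime maps. This requires the joint use of Lemma~\ref{lemma:KinK'} (to force the images $K_{\tau_1}$ and $K_{\tau_2}$ to coincide) and an auxiliary rigidity result for field automorphisms fixing a density-one set of primes. The small technical wrinkle is that Lemma~\ref{lemma:KinK'} is formulated on $l$-torsion while $\HomL$ concerns all characters, so one must either pick $l$ large enough and argue that the resulting information is sufficient, or reprove the needed portion on the full character group.
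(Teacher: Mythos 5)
Your overall architecture matches the paper's: well-definedness is exactly the paper's two-line composition ($\psi_\tau$ preserves local factors, $\iota_{K_\tau,K'}$ divides them by Lemma~\ref{lemma:iota_in_HomL}), and for injectivity the central move in both is to apply Lemma~\ref{lemma:KinK'} to $\alpha := \psi_{\tau_2}\circ\psi_{\tau_1}^{-1}$ (after post-composing with $\iota_{K',N}$, and on $l$-torsion for some suitable $l$) to force $K_{\tau_1}=K_{\tau_2}$ and $\alpha=\mathrm{id}$. The difference is in the framing around that core step. Your Grunwald--Wang preamble, deducing $\mfp_1=\mfp_2$ at totally split primes, is logically unnecessary: Lemma~\ref{lemma:KinK'} already delivers $\psi_{\tau_1}=\psi_{\tau_2}$ on $l$-torsion directly, which is exactly what the paper feeds into \cite[Lemma 6.4]{GDTHJS} to conclude $\tau_1=\tau_2$ in one stroke. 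You instead route through prime maps and \cite[Lemma 6.2]{GDTHJS}, which also works but is a detour. One small inaccuracy there: the primes of $K_{\tau_1}$ lying below totally split primes of $K'$ form a set of positive density (roughly $[K_{\tau_1}:\Qz]/[N':\Qz]$ for the Galois closure $N'$ of $K'$), not density one; since \cite[Lemma 6.2]{GDTHJS} needs only a single suitably chosen fixed prime, this slip is harmless but should be corrected. You also correctly flag the $l$-torsion versus full-character-group wrinkle; the paper glosses over this, and the fix you suggest (restrict to $l$-torsion for a sufficiently large $l$) is the right one and is what the paper implicitly does.
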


\begin{proof}
As $\tau\colon K \DistTo K_\tau$ is an isomorphism, we have $L(\chi, s) = L(\psi_\tau(\chi), s)$. By Lemma~\ref{lemma:iota_in_HomL}, $\iota_{K_\tau, K'} \in \HomLtau$, hence $\iota_{K_\tau, K'} \circ \psi_\tau$ is an element of $\HomL$. To prove injectivity, suppose that for some $\sigma, \tau \in \Homfield$ we have
\[
\iota_{K_\sigma, K'} \circ \psi_\sigma = \iota_{K_\tau, K'} \circ \psi_\tau.
\]
Then $\iota_{K_\sigma, K'} \circ (\psi_\sigma \circ \psi_\tau^{-1}) = \iota_{K_\tau, K'}$, hence by Lemma~\ref{lemma:KinK'} we have $K_\tau \subseteq K_\sigma$ and $\psi_\sigma\circ \psi_\tau^{-1} = \iota_{K_\tau, K_\sigma}$. As $K_\tau$ and $K_\sigma$ are both conjugates of $K$ (over $\Qz$) we conclude that $K_\tau = K_\sigma$ and that $\psi_\sigma\circ \psi_\tau^{-1} = \iota_{K_\tau, K_\sigma} = \restr{\textup{id}}{K_\tau}$, hence $\psi_\sigma = \psi_\tau$. Using \cite[Lemma 6.4]{GDTHJS} we conclude that $\sigma = \tau$ as maps $K \DistTo K_\tau$, hence also as elements of $\Homfield$. 
\end{proof}

\subsection{The map $\Omega_2$}
The aim of this section is to prove the following theorem:

\begin{theorem}\label{theorem:HomL_to_HomP}
Let $l> [K':\Qz]$ be a prime number, and suppose that $\psi$ is an element of $\HomLl$. Then $\psi \in \HomPld$.
\end{theorem}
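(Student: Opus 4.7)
The plan is to work prime by prime in $\pks$, using the local divisibility at $p = \mfp \cap \Zz$ to produce a candidate $\phi(\mfp)$ among the primes of $K'$ above $p$, and then using a vector-space covering argument to force this choice to be uniform across all characters unramified at $\mfp$. Throughout I will write $H_\mfp := \{\chi \in \Hkab[l] : \chi \text{ unramified at } \mfp\}$; this is an $\Fz_l$-subspace of $\Hkab[l]$.

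Fix $\mfp \in \pks$ with $p = \mfp \cap \Zz$ and any $\chi \in H_\mfp$. Then $1 - \chi(\mfp)T$ divides $L_\mfp(\chi, T)$, hence $L_p(\chi, T)$, and so by hypothesis it divides
\[
L_p(\psi(\chi), T) \;=\; \prod_{\mfq \mid p} \bigl(1 - \psi(\chi)(\mfq) T^{f_\mfq}\bigr).
\]
Evaluating at $T = \chi(\mfp)^{-1}$ shows that one of the factors on the right vanishes there, producing a prime $\mfq \mid p$ in $K'$ at which $\psi(\chi)$ is unramified (otherwise that factor would be $1$, which never vanishes) and for which $\chi(\mfp)^{f_\mfq} = \psi(\chi)(\mfq)$.

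The second step is to extract a $\mfq$ that does not depend on $\chi$. For each $\mfq \mid p$, set
\[
A_\mfq := \bigl\{\chi \in H_\mfp : \psi(\chi) \text{ unramified at } \mfq \text{ and } \chi(\mfp)^{f_\mfq} = \psi(\chi)(\mfq)\bigr\}.
\]
Because $\psi$ is a group homomorphism and the evaluation maps $\chi \mapsto \chi(\mfp)$ and $\chi \mapsto \psi(\chi)(\mfq)$ are multiplicative on the relevant unramified loci, $A_\mfq$ is an $\Fz_l$-subspace of $H_\mfp$, and the previous paragraph proves $H_\mfp = \bigcup_{\mfq \mid p} A_\mfq$.

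The final ingredient is the combinatorial fact that an $\Fz_l$-vector space is never a union of $\leq l$ proper subspaces. The number of primes of $K'$ above $p$ is at most $[K':\Qz]$, which by hypothesis is strictly less than $l$, so at least one $A_\mfq$ must coincide with $H_\mfp$. Defining $\phi(\mfp)$ to be any such $\mfq$ gives $\phi(\mfp) \cap \Zz = \mfp \cap \Zz$ and the desired identity $\psi(\chi)(\phi(\mfp)) = \chi(\mfp)^{f_{\phi(\mfp)}}$ for \emph{every} $\chi \in H_\mfp$. Taking $\cS = \pks$ (trivially of density one) yields the pair $(\cS,\phi)$ witnessing $\psi \in \HomPld$. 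The main obstacle I anticipate is securing the two structural ingredients that drive the argument — verifying carefully that each $A_\mfq$ is genuinely a subgroup (relying on $\psi$ being a homomorphism) and the covering lemma for $\Fz_l$-vector spaces — after which the hypothesis $l > [K':\Qz]$ does the rest of the work essentially automatically.
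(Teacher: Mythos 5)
Your proof is correct, and it takes a genuinely different route from the paper's. Both approaches start from the same observation (the paper's Lemma~\ref{lemma:phi_nonempty}): for $\chi$ unramified at $\mfp$, the divisibility $L_p(\chi,T)\mid L_p(\psi(\chi),T)$ forces some $\mfq\mid p$ with $\psi(\chi)(\mfq)=\chi(\mfp)^{f_\mfq}$. The divergence is in how one extracts a single $\mfq$ working for \emph{all} such $\chi$. The paper dualizes into sets of primes $\varphi(\chi,\mfp)\subseteq\pkx$, establishes multiplicativity properties of these sets (Lemma~\ref{lemma:multiplicative_properties_of_phi}), and then proves $\bigcap_\chi\varphi(\chi,\mfp)\neq\emptyset$ by an induction on the number of characters together with a pigeonhole among the sets $S_0,\dots,S_{[K':\Qz]}$ built from powers $\chi_1^j\chi_2$. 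You instead dualize into sets of characters $A_\mfq\subseteq H_\mfp$, verify each $A_\mfq$ is an $\Fz_l$-subspace (this is essentially where the homomorphism property of $\psi$ enters, playing the same structural role as the paper's Lemma~\ref{lemma:multiplicative_properties_of_phi}), and invoke the standard fact that an $\Fz_l$-vector space cannot be covered by fewer than $l+1$ proper subspaces; since there are at most $[K':\Qz]<l$ primes above $p$, one $A_\mfq$ equals all of $H_\mfp$. Your argument is shorter and more conceptual: it collapses the paper's inductive pigeonhole into a single appeal to the subspace-covering lemma, and makes transparent exactly where the hypothesis $l>[K':\Qz]$ is used. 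Two small points worth making explicit in a final write-up: first, $A_\mfq$ is the kernel of the homomorphism $\chi\mapsto\psi(\chi)(\mfq)\cdot\chi(\mfp)^{-f_\mfq}$ on the subgroup of $\chi\in H_\mfp$ with $\psi(\chi)$ unramified at $\mfq$, which is the cleanest way to see it is a subspace; second, you should state (and if needed, prove) the covering lemma, since it is the load-bearing step that replaces the paper's Lemma~\ref{lemma:phi_intersection_nonempty}.
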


\begin{corollary}
The map $\Omega_2\colon \HomLl \to \HomPld$ given by $\psi \mapsto \psi$ is well-defined and injective.
\end{corollary}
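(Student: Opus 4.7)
Given $\psi \in \HomLl$, I aim to construct $\phi\colon \pks \to \pkx$ so that $\cS := \pks$ (of density one in $\pk$) witnesses $\psi \in \HomPld$; injectivity of $\Omega_2\colon \psi\mapsto\psi$ is then automatic once well-definedness is established. The whole argument reduces to a local analysis at each rational prime combined with an $\Fz_l$-subspace cover lemma, with the hypothesis $l > [K':\Qz]$ entering only at the pigeonhole step.

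Fix $\mfp \in \pks$, set $p = \mfp \cap \Zz$, and let $\mfq_1, \dots, \mfq_s$ (with $s \leq [K':\Qz]$) be the primes of $K'$ above $p$. For any $\chi \in \Hkab[l]$ unramified at $\mfp$, the factor $1 - \chi(\mfp)T$ appears in $L_p(\chi, T)$ because $f_\mfp = 1$, so by the divisibility hypothesis it divides
\[
L_p(\psi(\chi), T) = \prod_{i:\,\psi(\chi)\,\text{unram at}\,\mfq_i}\bigl(1 - \psi(\chi)(\mfq_i) T^{f_{\mfq_i}}\bigr).
\]
Substituting $T = \chi(\mfp)^{-1}$ forces some index $i$ (depending a priori on $\chi$) for which $\psi(\chi)$ is unramified at $\mfq_i$ and $\psi(\chi)(\mfq_i) = \chi(\mfp)^{f_{\mfq_i}}$.

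Now let $H_\mfp \leq \Hkab[l]$ denote the subgroup of characters unramified at $\mfp$, and set
\[
V_{\mfq_i} := \bigl\{\chi \in H_\mfp : \psi(\chi)\,\text{unram at}\,\mfq_i,\; \psi(\chi)(\mfq_i) = \chi(\mfp)^{f_{\mfq_i}}\bigr\}.
\]
Because $\psi$ is a group homomorphism and because the characters in $\Hkabx[l]$ unramified at a fixed prime form a subgroup, both conditions are closed under products and inverses, so each $V_{\mfq_i}$ is a subgroup of $H_\mfp$. The previous paragraph gives $H_\mfp = \bigcup_{i=1}^s V_{\mfq_i}$. Since every element of $\Hkab[l]$ has order dividing $l$, the group $H_\mfp$ carries the structure of an $\Fz_l$-vector space and each $V_{\mfq_i}$ is an $\Fz_l$-subspace. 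The classical fact that an $\Fz_l$-vector space cannot be covered by $\leq l$ proper subspaces, combined with $s \leq [K':\Qz] < l$, forces $V_{\mfq_i} = H_\mfp$ for some $i$. Declare $\phi(\mfp)$ to be any such $\mfq_i$; then $\phi(\mfp) \cap \Zz = p$ and $\psi(\chi)(\phi(\mfp)) = \chi(\mfp)^{f_{\phi(\mfp)}}$ for every $\chi \in H_\mfp$.

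The principal obstacle is precisely this pigeonhole: the $L$-series divisibility only produces a $\chi$-dependent prime, and one must promote this to a single $\mfq$ valid uniformly for the whole of $H_\mfp$. The key structural observation enabling the promotion is that the $V_{\mfq_i}$ are subgroups of an $\Fz_l$-vector space, which lets the sharp hypothesis $l > [K':\Qz]$ intervene through the subspace cover lemma. Once this is in place, the remaining verifications (subgroup closure of the defining conditions, and that the constructed $\phi$ meets the requirements of Definition~\ref{definition:hom_sets} with $\cS = \pks$) are routine.
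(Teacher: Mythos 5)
Your proof is correct, and it takes a genuinely different route from the paper. The paper establishes the content of this corollary via Theorem~\ref{theorem:HomL_to_HomP}, which it proves through Lemmas~\ref{lemma:multiplicative_properties_of_phi}, \ref{lemma:phi_nonempty} and \ref{lemma:phi_intersection_nonempty}: it works with the sets $\varphi(\chi,\mfp)$ of admissible primes of $K'$ over $p$, shows each is nonempty, establishes multiplicative compatibilities, and then runs an explicit induction with a pigeonhole on the finitely many primes above $p$ (using $l > [K':\Qz]$ to guarantee $0 < k-j < l$, so $\varphi(\chi_1^{k-j},\mfp)=\varphi(\chi_1,\mfp)$) to show the intersection $\bigcap_\chi \varphi(\chi,\mfp)$ is nonempty. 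You instead dualize: for each $\mfq_i$ above $p$ you form $V_{\mfq_i}\subseteq H_\mfp$, observe these are $\Fz_l$-subspaces that cover $H_\mfp$, and invoke the classical cover lemma (an $\Fz_l$-vector space is not a union of at most $l$ proper subspaces) with $s \leq [K':\Qz] < l$. This is the same underlying combinatorics — the paper's Lemma~\ref{lemma:multiplicative_properties_of_phi} is really the subgroup property of $V_\mfq$ in disguise, and its pigeonhole is the engine of the cover lemma — but your packaging is more conceptual and shorter, replacing the ad hoc induction with a standard black-box fact, and makes transparent exactly where the bound $l > [K':\Qz]$ enters. One minor remark: you should note (as you implicitly use) that the trivial character lies in every $V_{\mfq_i}$, so these are indeed nonempty subgroups, and that the cover lemma conclusion (some $V_{\mfq_i}=H_\mfp$) also holds trivially when $\dim_{\Fz_l} H_\mfp \leq 1$.
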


We prove a triplet of lemmas from which the theorem readily follows. Fix a prime $l > [K':\Qz]$ and a homomorphism $\psi \in \HomLl$.

Let $\pow(\pkx)$ denote the power set of $\pkx$ and define the map $\varphi\colon \Hkab[l] \times \pks \to \pow(\pkx)$ as follows: $\varphi(\chi, \mfp)$ is the set of primes $\mfq $ of $K'$ such that $\mfq \cap \Zz = \mfp \cap \Zz$ and 
\[
\psi(\chi)(\mfq) = \chi(\mfp)^{f_{\mfq}}.
\]

\begin{lemma}\label{lemma:multiplicative_properties_of_phi}
Fix a prime $\mfp \in \pks$. The following hold:
\begin{enumerate}
\item If $\chi \in \Hkab[l]$ is unramified at $\mfp$, then $\varphi(\chi, \mfp) = \varphi(\chi^j, \mfp)$ for any $j$ with $(j, l) = 1$.
\item Suppose $\mfq \in \varphi(\chi_1, \mfp)$. Then, provided that $\chi_1$ and $\chi_2$ are unramified at $\mfp$,
\[
\mfq \in \varphi(\chi_2, \mfp) \Longleftrightarrow \mfq \in \varphi(\chi_1 \chi_2, \mfp).
\]
\end{enumerate}
\end{lemma}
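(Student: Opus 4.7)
Both parts reduce to straightforward bookkeeping with the multiplicativity of $\psi$ and of Frobenius evaluation, once the role of ramification is handled. The main thing to keep track of is when a value $\psi(\chi)(\mfq)$ is forced to be nonzero so that the character identity $\psi(\chi_1\chi_2)(\mfq)=\psi(\chi_1)(\mfq)\psi(\chi_2)(\mfq)$ actually applies. Since $\chi,\chi_1,\chi_2$ are assumed unramified at $\mfp$ and $\mfp \in \pks$, all the right-hand side quantities $\chi(\mfp)^{f_\mfq}, \chi_i(\mfp)^{f_\mfq}$ lie in $\mu_l$.

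For (1), the plan is to split on whether $\psi(\chi)$ is ramified at $\mfq$. If it is, then since $(j,l)=1$ the characters $\psi(\chi^j)=\psi(\chi)^j$ and $\psi(\chi)$ cut out the same cyclic extension of $K'$, so $\psi(\chi^j)$ is also ramified at $\mfq$; in this case both $\psi(\chi)(\mfq)$ and $\psi(\chi^j)(\mfq)$ vanish, while $\chi(\mfp)^{f_\mfq}$ and $\chi^j(\mfp)^{f_\mfq}=\chi(\mfp)^{jf_\mfq}$ are nonzero, so $\mfq$ lies in neither $\varphi(\chi,\mfp)$ nor $\varphi(\chi^j,\mfp)$. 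If $\psi(\chi)$ is unramified at $\mfq$, then the values $\psi(\chi)(\mfq)$ and $\chi(\mfp)^{f_\mfq}$ both lie in $\mu_l$, on which $x\mapsto x^j$ is a bijection because $(j,l)=1$; thus
\[
\psi(\chi)(\mfq)=\chi(\mfp)^{f_\mfq}\iff \psi(\chi)(\mfq)^j=\chi(\mfp)^{jf_\mfq},
\]
and the right-hand side is exactly the condition $\mfq\in\varphi(\chi^j,\mfp)$.

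For (2), the hypothesis $\mfq\in\varphi(\chi_1,\mfp)$ gives $\psi(\chi_1)(\mfq)=\chi_1(\mfp)^{f_\mfq}\in\mu_l$, hence $\psi(\chi_1)$ is unramified at $\mfq$. For the $(\Rightarrow)$ direction, assume additionally $\mfq\in\varphi(\chi_2,\mfp)$; then $\psi(\chi_2)$ is unramified at $\mfq$ by the same argument, so $\psi(\chi_1\chi_2)=\psi(\chi_1)\psi(\chi_2)$ is unramified at $\mfq$ and evaluating at Frobenius yields
\[
\psi(\chi_1\chi_2)(\mfq)=\psi(\chi_1)(\mfq)\psi(\chi_2)(\mfq)=\chi_1(\mfp)^{f_\mfq}\chi_2(\mfp)^{f_\mfq}=(\chi_1\chi_2)(\mfp)^{f_\mfq},
\]
using that $\chi_1,\chi_2$ are unramified at $\mfp$. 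For the $(\Leftarrow)$ direction, assume $\mfq\in\varphi(\chi_1\chi_2,\mfp)$; the right-hand side $(\chi_1\chi_2)(\mfp)^{f_\mfq}$ is a nonzero $l$-th root of unity, so $\psi(\chi_1\chi_2)$ is unramified at $\mfq$. Since $\psi(\chi_2)=\psi(\chi_1)^{-1}\psi(\chi_1\chi_2)$ and both factors on the right are unramified at $\mfq$, so is $\psi(\chi_2)$, and dividing the value identities gives $\psi(\chi_2)(\mfq)=\chi_2(\mfp)^{f_\mfq}$, i.e.\ $\mfq\in\varphi(\chi_2,\mfp)$.

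The only real subtlety is justifying the multiplicativity of evaluation at Frobenius; this is standard once one passes to a common finite cyclic extension containing $K'_{\chi_1}$, $K'_{\chi_2}$, and $K'_{\chi_1\chi_2}$, in which $\mfq$ is unramified whenever the relevant characters are. Everything else is formal, and I do not anticipate a genuine obstacle.
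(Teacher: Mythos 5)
Your proof is correct and follows essentially the same route as the paper's: part (1) hinges on $\psi(\chi^j)=\psi(\chi)^j$ together with the fact that $x\mapsto x^j$ is a bijection on $\mu_l$ when $(j,l)=1$ (with $\psi(\chi)$ and $\psi(\chi^j)$ ramified at the same primes), and part (2) hinges on $\psi(\chi_1)$ being unramified at $\mfq$ because $\psi(\chi_1)(\mfq)=\chi_1(\mfp)^{f_\mfq}\neq 0$, after which the multiplicativity of evaluation at $\mfq$ gives the equivalence. You spell out the ramification case analysis and the justification of multiplicativity a bit more explicitly than the paper does, but the underlying argument is the same.
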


\begin{proof}
All characters concerned are unramified at $\mfp$, hence the value at $\mfp$ of the product of the characters equals the product of the values at $\mfp$ of the individual characters.
\begin{enumerate}
\item As $\psi$ is a group homomorphism, $\psi(\chi^j) = \psi(\chi)^j$. Note that as $\psi(\chi)$ is of order $l$ and $(j, l) = 1$, we have that $\psi(\chi)$ is ramified at a prime precisely when $\psi(\chi)^j$ is. The result follows from
\[
\psi(\chi)(\mfq) = \chi(\mfp)^{f_{\mfq}} \Longleftrightarrow \psi(\chi)(\mfq)^j = \chi(\mfp)^{j f_{\mfq}}.
\]
\item We have $\psi(\chi_1 \chi_2) = \psi(\chi_1) \psi(\chi_2)$. Moreover, as $\chi_1$ is unramified at $\mfp$, and $\mfq \in \varphi(\chi_1, \mfp)$, it follows that $\psi(\chi_1)$ is unramified at $\mfq$. Hence we have
\[
\psi(\chi_1\chi_2)(\mfq) = \psi(\chi_1)(\mfq) \psi(\chi_2)(\mfq) = \chi_1(\mfp)^{f_{\mfq}} \psi(\chi_2)(\mfq),
\]
hence $\psi(\chi_1\chi_2)(\mfq) = \chi_1\chi_2(\mfp)^{f_{\mfq}}$ if and only if $\psi(\chi_2)(\mfq) = \chi_2(\mfp)^{f_{\mfq}}$.\qedhere
\end{enumerate}
\end{proof}

\begin{lemma}\label{lemma:phi_nonempty}
For any $\chi \in \Hkab$ and $\mfp \in \pks$ such that $\chi$ is unramified at $\mfp$, the set $\varphi(\chi, \mfp)$ is non-empty.
\end{lemma}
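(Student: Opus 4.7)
The plan is to translate the divisibility hypothesis on $\psi$ into a statement about roots of the polynomials $L_p(\chi, T)$ and $L_p(\psi(\chi), T)$, and then read off a suitable prime $\mfq$ from the factorization of $L_p(\psi(\chi), T)$ over $\Cz[T]$.

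Set $p = \mfp \cap \Zz$. Since $\mfp \in \pks$, we have $f_\mfp = 1$ and $\chi$ is unramified at $\mfp$, so by Definition~\ref{definition:local factor at p} the linear polynomial
\[
L_\mfp(\chi, T^{f_\mfp}) = 1 - \chi(\mfp) T
\]
is one of the factors of $L_p(\chi, T)$. Because $\psi \in \HomLl$, we have $L_p(\chi, T) \mid L_p(\psi(\chi), T)$ in $\Cz[T]$, hence
\[
(1 - \chi(\mfp) T) \mid \prod_{\mfq \mid p} \bigl(1 - \psi(\chi)(\mfq)\, T^{f_\mfq}\bigr).
\]
Since $\chi(\mfp)$ is a root of unity (so nonzero), the element $T_0 := \chi(\mfp)^{-1}$ is the unique root of the left-hand linear factor, and therefore must be a root of some factor on the right. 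In other words, there exists a prime $\mfq$ of $K'$ above $p$ such that
\[
1 - \psi(\chi)(\mfq)\, T_0^{f_\mfq} = 0, \qquad \text{i.e.,}\qquad \psi(\chi)(\mfq) = \chi(\mfp)^{f_\mfq}.
\]
By the definition of $\varphi$ this $\mfq$ lies in $\varphi(\chi, \mfp)$, proving the set is nonempty.

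The only subtle points are (i) that $\chi(\mfp)^{-1}$ makes sense (guaranteed by the hypothesis that $\chi$ is unramified at $\mfp$), and (ii) that the divisibility of an irreducible in $\Cz[T]$ by a product indeed forces divisibility by one of the factors — this is just the UFD property of $\Cz[T]$, or equivalently the fact that a root of a product of polynomials is a root of one of them. Ramification of $\psi(\chi)$ at some primes above $p$ causes no problem: for such primes the corresponding factor is $1 - 0 \cdot T^{f_\mfq} = 1$ and has no roots, so the witnessing $\mfq$ must be an unramified one, which is exactly what the definition of $\varphi$ demands. I do not expect any genuine obstacle here; the lemma is essentially a direct consequence of unique factorization of $L_p(\psi(\chi), T)$ in $\Cz[T]$ combined with the hypothesis on $\psi$.
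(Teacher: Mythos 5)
Your argument is essentially identical to the paper's: both set $p = \mfp \cap \Zz$, observe that the linear factor $1 - \chi(\mfp)T$ divides $L_p(\psi(\chi), T)$, and conclude that $\chi(\mfp)^{-1}$ must be a root of some local factor $1 - \psi(\chi)(\mfq)T^{f_\mfq}$ with $\mfq \mid p$, yielding $\psi(\chi)(\mfq) = \chi(\mfp)^{f_\mfq}$. Your additional remarks on the UFD property and on the ramified factors being constant are correct but routine, and the paper leaves them implicit.
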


\begin{proof}
Denote $p = \mfp \cap \Zz$. As $L_p(\chi, T) \mid L_p(\psi(\chi), T)$ and $L_{\mfp}(\chi, T) = 1 - \chi(\mfp)T$, the polynomial $L_p(\psi(\chi), T)$ has a zero at $T = \chi(\mfp)^{-1}$.

Hence there is a prime $\mfq$ of $K'$ lying over $p$ such that $L_{\mfq}(\psi(\chi), T^{f_{\mfq}})$ has a zero at $T = \chi(\mfp)^{-1}$. As 
\[
L_{\mfq}(\psi(\chi), T^{f_{\mfq}}) = 1 - \psi(\chi)(\mfq) T^{f_{\mfq}},
\]
we have 
\[
\psi(\chi)(\mfq) = \chi(\mfp)^{f_{\mfq}},
\]
and thus $\mfq \in \varphi(\chi, \mfp)$.
\end{proof}

\begin{lemma}\label{lemma:phi_intersection_nonempty}
Let $l > [K':\Qz]$ be a prime number. For a fixed $\mfp \in \pks$ the set
\[
\bigcap_{\chi(\mfp) \neq 0} \varphi(\chi, \mfp)
\]
is non-empty, where the intersection is taken over all characters $\chi \in \Hkab[l]$ unramified at $\mfp$.
\end{lemma}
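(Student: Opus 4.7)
The plan is: list the primes $\mfq_1,\ldots,\mfq_m$ of $K'$ above $p:=\mfp\cap\Zz$ (so $m\le[K':\Qz]<l$), write $H:=\{\chi\in\Hkab[l]:\chi(\mfp)\neq 0\}$ for the $l$-torsion characters unramified at $\mfp$, and set $A_i:=\{\chi\in H : \mfq_i\in\varphi(\chi,\mfp)\}$. Lemma~\ref{lemma:multiplicative_properties_of_phi} then shows each $A_i$ is an $\Fz_l$-subspace of $H$ — part~(1) gives invariance under $\Fz_l^{\times}$-scaling, and part~(2) gives closure under products — while Lemma~\ref{lemma:phi_nonempty} gives the covering relation $H=\bigcup_{i=1}^m A_i$. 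The task reduces to showing that one of these subspaces equals all of $H$.

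Suppose for contradiction that every $A_i$ is a \emph{proper} subspace of $H$, and pick $\chi_i\in H\setminus A_i$ for each $i$. Let $U:=\langle\chi_1,\ldots,\chi_m\rangle\subseteq H$, a finite-dimensional $\Fz_l$-subspace in which each $U\cap A_i$ is a proper subspace (since $\chi_i\notin A_i$) and which is still covered, $U=\bigcup_{i=1}^m (U\cap A_i)$. For $\dim U\ge 2$, the classical estimate that an $\Fz_l$-vector space of dimension $\ge 2$ cannot be covered by fewer than $l+1$ of its proper subspaces — each such subspace has at most $|U|/l$ elements, so $|U|-1\le m(|U|/l-1)$ forces $m\ge l+1$ — contradicts $m\le[K':\Qz]<l+1$. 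If $\dim U=1$, then any nonzero $\chi\in U$ is an $\Fz_l^{\times}$-multiple of every $\chi_j$, so the $\Fz_l^{\times}$-invariance of the $A_j$ forces $\chi$ to lie outside every $A_j$, contradicting $H=\bigcup_j A_j$. If $\dim U=0$, then every $\chi_i=0$, contradicting $\chi_i\in H\setminus A_i$ since the trivial character lies in every $A_i$. Hence some $A_i$ equals all of $H$, and the corresponding $\mfq_i$ lies in $\bigcap_{\chi(\mfp)\neq 0}\varphi(\chi,\mfp)$.

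The main obstacle is the covering estimate for $\dim U\ge 2$, and it is the only step that uses the hypothesis $l>[K':\Qz]$; everything else relies only on the structural Lemma~\ref{lemma:multiplicative_properties_of_phi} and the non-emptiness Lemma~\ref{lemma:phi_nonempty}. A pleasant feature of this approach is that it avoids having to bound the index $[H:A_i]$ directly (which would be awkward when $p=l$, since then local ramification at $\mfq_i$ contributes more than a single $\Fz_l$-dimension): the subspace-cover argument cares only about how many sets appear in the union, not about their individual indices.
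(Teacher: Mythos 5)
Your proof is correct, and it takes a genuinely different route from the paper's. The paper argues by induction on the number of characters in a finite intersection: in the induction step it introduces the auxiliary sets $S_j = \varphi(\chi_1^j\chi_2,\mfp)\cap\varphi(\chi_3,\mfp)\cap\dots\cap\varphi(\chi_{n+1},\mfp)$ for $j=0,\dots,[K':\Qz]$, uses the pigeonhole principle on the $\le[K':\Qz]$ primes above $p$ to find $j<k$ with $S_j\cap S_k\neq\emptyset$, and then untangles $\varphi(\chi_1^{k-j},\mfp)$ via Lemma~\ref{lemma:multiplicative_properties_of_phi}; the hypothesis $l>[K':\Qz]$ enters to guarantee $(k-j,l)=1$. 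Finally, nonemptiness of every finite intersection passes to the full intersection because the $\varphi(\chi,\mfp)$ live in a fixed finite set. You instead dualize: you fix the primes $\mfq_i$ above $p$ and consider the sets $A_i=\{\chi\in H:\mfq_i\in\varphi(\chi,\mfp)\}$, correctly identify them as $\Fz_l$-subspaces of $H$ (parts (1) and (2) of Lemma~\ref{lemma:multiplicative_properties_of_phi} give $\Fz_l^\times$-invariance, closure under products and inverses, and $0\in A_i$), note $H=\bigcup_i A_i$ from Lemma~\ref{lemma:phi_nonempty}, and invoke the standard fact that an $\Fz_l$-vector space cannot be covered by fewer than $l+1$ proper subspaces. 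Here $l>[K':\Qz]$ enters as $m\le[K':\Qz]<l+1$. Your handling of the degenerate cases $\dim U\le 1$ is also correct (each $\chi_i\neq 0$ since $0\in A_i$, and part (1) dispatches the one-dimensional case). Both arguments are of comparable length, but yours makes the structural content more visible — the obstruction is precisely that $H$ is a vector space over a field too large to be covered by $[K':\Qz]$ proper subspaces — and it delivers the full (infinite) intersection in one stroke without the finite-to-infinite reduction.
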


\begin{proof}
As all $\varphi(\chi, \mfp)$ contain only primes lying over $\mfp \cap \Zz$ and are therefore finite, it suffices to show that any finite intersection is non-empty for any choice of characters unramified at $\mfp$. We proceed by induction.

\noindent\textbf{Induction hypothesis.} Suppose that for a certain $n \geq 1$ and any choice of characters $\chi_1, \dots, \chi_n \in \Hkab[l]$ unramified at $\mfp$ we have that
\[
\varphi(\chi_1, \mfp) \cap \varphi(\chi_2, \mfp) \cap \dots \cap \varphi(\chi_n, \mfp) \neq \emptyset.
\]
The case $n = 1$ holds by Lemma~\ref{lemma:phi_nonempty}.

\noindent\textbf{Induction step.}  Let $\chi_1, \chi_2, \dots, \chi_{n+1} \in \Hkab[l]$ be characters unramified at $\mfp$. For any integer $j$, define
\begin{align*}
S_j := \varphi(\chi_1^j\chi_2, \mfp) \cap \varphi(\chi_3, \mfp) \cap \dots \cap \varphi(\chi_{n+1}, \mfp)
\end{align*}
By the induction hypothesis, $S_j$ is non-empty for any $j \in \Nz_0$. As every $S_j$ consists only of primes of $K'$ lying over $\mfp \cap \Zz$ of which there are at most $[K':\Qz]$, there exist integers $0 \leq j < k \leq [K':\Qz]$ for which the intersection $S_j \cap S_k$ is nonempty. Let $\mfq \in S_j \cap S_k$, then $\mfq \in \varphi(\chi_1^j\chi_2, \mfp)$ and $\mfq \in \varphi(\chi_1^k\chi_2, \mfp)$. We prove that $\mfq \in \varphi(\chi_1, \mfp) \cap \varphi(\chi_2, \mfp)$ by repeated application of Lemma~\ref{lemma:multiplicative_properties_of_phi}.

As $\mfq \in \varphi(\chi^j \chi_2, \mfp)$ we have $\mfq \in \varphi(\chi_1^{-j}\chi_2^{-1}, \mfp)$. We combine this with $\mfq \in \varphi(\chi_1^k\chi_2, \mfp)$ to obtain $\mfq \in \varphi(\chi_1^k\chi_2 \chi_1^{-j}\chi_2^{-1}, \mfp)$, i.e.\ $\mfq \in \varphi(\chi_1^{k-j}, \mfp)$. By assumption, $l > [K':\Qz] > k-j > 0$, thus $\mfq \in \varphi(\chi_1, \mfp)$. Furthermore, $\mfq \in \varphi(\chi_1^{-k}, \mfp)$, hence it follows from $\mfq \in \varphi(\chi_1^k\chi_2, \mfp)$ that $\mfq \in \varphi(\chi_1^k\chi_2 \chi_1^{-k}, \mfp) = \varphi(\chi_2, \mfp)$.
\end{proof}

\begin{proof}[Proof of Theorem~\ref{theorem:HomL_to_HomP}]
Lemma~\ref{lemma:phi_intersection_nonempty} shows that for any $\mfp \in \pks$ there is a prime $\mfq$ of $K'$ lying over $\mfp \cap \Zz$ such that 
\[
\psi(\chi)(\mfq)^{f_{\mfq}} = \chi(\mfp)
\]
for any $\chi$ unramified at $\mfp$. Setting $\varphi(\mfp) := \mfq$ we find $\psi \in \HomPld$.
\end{proof}

\subsection{The map $\Omega_3$}
Recall that any $\tau \in \Homfield$ induces an isomorphism $\psi_{\tau}\colon \Hkab[l] \DistTo \Hktauab[l]$. We prove the following theorem:

\begin{theorem}\label{theorem:HomP_to_Homfield}
Let $K, K'$ be number fields and let $N$ be the Galois closure of $KK'$. Let $l > [N:\Qz]^2$ be a prime number and let $\psi \in \HomPld$. Then there exists a $\tau \in \Homfield$ such that $K_\tau \subseteq K'$, and $\psi$ is of the form $\iota_{K_\tau, K'} \circ \psi_\tau$.
\end{theorem}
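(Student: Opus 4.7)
The plan is to lift $\psi$ and the associated map $\phi$ to the common Galois closure $N$ and recognize $\psi$ among the restriction maps arising from embeddings $K \hookrightarrow N$. Concretely, set $\Psi := \iota_{K', N} \circ \psi \colon \Hkab[l] \to \Hnab[l]$; by Lemma~\ref{lemma:iota_injective} and the hypothesis $l > [N:\Qz]^2 \geq [N:K']$, the restriction $\iota_{K', N}$ is injective on $\Hkabx[l]$. Hence, once I produce an embedding $\tau \colon K \hookrightarrow N$ with $\Psi = \iota_{K_\tau, N} \circ \psi_\tau$, Lemma~\ref{lemma:KinK'} applied to $K_\tau, K' \subseteq N$ and the homomorphism $\alpha := \psi \circ \psi_\tau^{-1} \colon \Hktauab[l] \to \Hkabx[l]$ (which by construction satisfies $\iota_{K', N} \circ \alpha = \Psi \circ \psi_\tau^{-1} = \iota_{K_\tau, N}$) yields $K_\tau \subseteq K'$ and $\alpha = \iota_{K_\tau, K'}$, i.e.\ $\psi = \iota_{K_\tau, K'} \circ \psi_\tau$, as required.

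To build $\tau$, I compare $\Psi$ against the candidate maps $\Psi_{\tau'} := \iota_{K_{\tau'}, N} \circ \psi_{\tau'}$, indexed by the $[K:\Qz]$ embeddings $\tau' \colon K \hookrightarrow N$. Restricting attention to primes $\mfp \in \cS$ whose underlying rational prime $p = \mfp \cap \Zz$ splits completely in $N/\Qz$ (a density-one subset of $\cS$ by Chebotarev), setting $\mfq := \phi(\mfp)$ and picking any prime $\mfP$ of $N$ over $\mfq$, a direct computation with the restriction formula (since all inertia degrees are $1$) gives
\[
\Psi(\chi)(\mfP) = \psi(\chi)(\mfq) = \chi(\mfp), \qquad \Psi_{\tau'}(\chi)(\mfP) = \chi\bigl((\tau')^{-1}(\mfP \cap \cO_{K_{\tau'}})\bigr).
\]
By Grunwald-Wang the characters in $\Hkab[l]$ separate the primes of $K$ above $p$, so the equality $\Psi(\chi)(\mfP) = \Psi_{\tau'}(\chi)(\mfP)$ for every $\chi \in \Hkab[l]$ is equivalent to the combinatorial condition $\tau'(\mfp) = \mfP \cap \cO_{K_{\tau'}}$. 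Translating this into $\Gal(N/\Qz)$ via coset representatives, this condition singles out a unique coset of candidates $\tau'$ for each pair $(\mfp, \mfP)$, and moreover the image $K_{\tau'}$ lies in $K'$ precisely when a derived containment of subgroups of $\Gal(N/\Qz)$ is satisfied.

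The main obstacle — and the place the hypothesis $l > [N:\Qz]^2$ is essential — is promoting this prime-by-prime matching to a single embedding $\tau$ working for a density-one set of primes. Since there are only $[K:\Qz] \leq [N:\Qz]$ candidate embeddings, a pigeonhole argument over a density-one set produces at least one $\tau$ for which the matching holds on a positive-density subset of pairs $(\mfp, \mfP)$; by Chebotarev, equality of character values on a positive-density set of Frobenius elements forces equality of the characters $\Psi(\chi)$ and $\iota_{K_\tau, N}(\psi_\tau(\chi))$ on all of $G_N$, for every $\chi \in \Hkab[l]$. The quadratic bound $l > [N:\Qz]^2$ enters through a character-intersection argument in $\Hnab[l]$ analogous to Lemma~\ref{lemma:phi_intersection_nonempty}, in which the $[K:\Qz]$ candidate embeddings play the role of the primes above $p$ and the large prime $l$ supplies enough independent $l$-torsion characters to detect any inconsistency among multiple simultaneous candidates. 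Once this uniform $\tau$ is secured, the reduction of the first paragraph completes the proof, and in particular forces $K_\tau \subseteq K'$.
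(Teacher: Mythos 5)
Your overall strategy matches the paper's exactly: lift $\psi$ to $\Psi := \iota_{K', N} \circ \psi$, find an embedding $\tau\colon K \hookrightarrow N$ with $\Psi = \iota_{K_\tau, N} \circ \psi_\tau$, and then apply Lemma~\ref{lemma:KinK'} to the composite $\alpha = \psi \circ \psi_\tau^{-1}$ to deduce $K_\tau \subseteq K'$ and $\psi = \iota_{K_\tau, K'}\circ\psi_\tau$. That final reduction is correct and identical to the paper. But the middle step --- producing $\tau$ --- is where the entire quantitative content of the theorem sits (it is the paper's Lemma~\ref{lemma:hom_version_galois}, applied with $N$ in place of $K'$), and your account of it has genuine gaps.

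First, you claim the primes $\mfp \in \cS$ whose underlying rational prime splits completely in $N/\Qz$ form ``a density-one subset of $\cS$.'' That is false: such $\mfp$ have density $1/[N:K]$, not one. The paper's Lemma~\ref{lemma:hom_version_galois} never restricts to split primes; it works with density-one sets of degree-one primes and tracks densities through the passage to primes of $N$ via the set $\cS'$.

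Second, and more seriously, the sentence ``by Chebotarev, equality of character values on a positive-density set of Frobenius elements forces equality of the characters'' is simply not true. A nontrivial character of prime order $l$ takes the value $1$ on a set of primes of density exactly $1/l > 0$. What is needed is density strictly exceeding $1/l$, and obtaining that bound is the whole point. In the paper, the image of the prime map has density $\geq 1/[N:\Qz]$; a pigeonhole over the $[N:\Qz]$ elements $\sigma \in \Gal(N/\Qz)$ (not over the $[K:\Qz]$ embeddings of $K$, as you propose) produces a $\sigma$ for which the relevant set of primes of $N$ has density $\geq 1/[N:\Qz]^2$, and only then does $l > [N:\Qz]^2$ force the quotient character $(\psi_\sigma \circ \iota_{K,N})(\chi)\cdot\Psi(\chi)^{-1}$ to be trivial. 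This is the sole place the quadratic bound enters; your attribution of it to ``a character-intersection argument analogous to Lemma~\ref{lemma:phi_intersection_nonempty}'' misplaces it --- that induction-and-pigeonhole argument on characters belongs to Theorem~\ref{theorem:HomL_to_HomP} and uses only the weaker bound $l > [K':\Qz]$. So the construction of $\tau$ is not actually carried out in your proposal: the density bookkeeping and the correct quantitative form of the Chebotarev step are missing, and as stated two of the intermediate claims are false.
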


\begin{remark}
By injectivity of $\Omega_1$(Lemma~\ref{lemma:Omega1_injective}) there is a unique $\tau$ such that $\psi = \iota_{K_\tau, K'} \circ \psi_\tau$. 
\end{remark}

This theorem provides the desired map $\Omega_3$:

\begin{corollary}
Let $\tau$ be the unique homomorphism such that $\psi = \iota_{K_\tau, K'} \circ \psi_\tau$. The map $\Omega_3\colon \HomPld \to \Homfield$ given by $\psi \mapsto \tau$, is well-defined and injective.
\end{corollary}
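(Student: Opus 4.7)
My plan is to reduce the problem, via Lemma~\ref{lemma:KinK'}, to the construction of a Galois element $\tilde\tau \in G := \Gal(N/\Qz)$ whose restriction $\tau := \tilde\tau|_K$ is the desired field homomorphism. Set $\alpha := \iota_{K, N}$ and $\beta := \iota_{K', N} \circ \psi$, both regarded as maps $\Hkab[l] \to \Hnab[l]$, and write $H := \Gal(N/K)$, $H' := \Gal(N/K')$. If I find $\tilde\tau$ satisfying $\tilde\tau^{-1} H' \tilde\tau \subseteq H$ (so that $K_\tau := \tilde\tau(K) \subseteq K'$) and $\beta(\chi)(\mfP) = \alpha(\chi)(\tilde\tau^{-1}\mfP)$ for every $\chi \in \Hkab[l]$ and every prime $\mfP$ of $N$, then $\iota_{K', N} \circ \psi = \iota_{K_\tau, N} \circ \psi_\tau$, and Lemma~\ref{lemma:KinK'} applied to $(K_\tau, K')$ inside $N$ and to the map $\psi \circ \psi_\tau^{-1}$ (valid since $l > [N:\Qz]^2 > [N:K']$) delivers $\psi = \iota_{K_\tau, K'} \circ \psi_\tau$.

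To construct $\tilde\tau$, first restrict to the density-one subset $\cS_0 \subseteq \cS$ of primes $\mfp$ such that $p := \mfp \cap \Zz$ splits completely in $N/\Qz$. For each such $p$, fix a prime $\mfP_p$ of $N$ above $p$, identifying the primes of $N$ over $p$ with $G$ via $g \leftrightarrow g\mfP_p$. For $\mfp \in \cS_0$, pick $g_\mfp, h_\mfp \in G$ (each unique modulo left multiplication by $H$ and $H'$ respectively) with $\mfp = g_\mfp \mfP_p \cap \cO_K$ and $\phi(\mfp) = h_\mfp \mfP_p \cap \cO_{K'}$, and form the well-defined double coset $D_\mfp := H' h_\mfp g_\mfp^{-1} H \in H' \backslash G / H$. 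Since $f_{\phi(\mfp)} = 1$ for $\mfp \in \cS_0$, a direct computation yields $\beta(\chi)(\mfP) = \chi(\mfp)$ for every prime $\mfP$ of $N$ above $\phi(\mfp)$, while $\alpha(\chi)(\tilde\tau^{-1}\mfP) = \chi(\tilde\tau^{-1}\mfP \cap \cO_K)$. Invoking Grunwald--Wang as in Lemma~\ref{lemma:KinK'}, the required identity at $\mfp$ becomes the combinatorial condition $\tilde\tau \in D_\mfp$ together with $\tilde\tau^{-1} H' \tilde\tau \subseteq H$.

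The heart of the proof is to show that $D_\mfp$ attains a single common value $D \in H' \backslash G / H$ on a density-one subset $\cS_1 \subseteq \cS_0$. Since $|H' \backslash G / H| \leq [N:\Qz]$, pigeonhole alone only produces positive density. To upgrade to density one I plan to combine the multiplicativity of $\psi$ with Grunwald--Wang in the spirit of Lemma~\ref{lemma:phi_intersection_nonempty}: given $\mfp_1, \mfp_2 \in \cS_0$ with $D_{\mfp_1} \neq D_{\mfp_2}$, one can build characters $\chi \in \Hkab[l]$ with prescribed values at $\mfp_1$ and $\mfp_2$ and exploit relations $\psi(\chi^j \chi')(\phi(\mfp_i)) = \chi^j \chi'(\mfp_i)$ across suitable powers $j$; the bound $l > [N:\Qz]^2$ then dominates any exponent arising from the finite stock of candidate double cosets combined with the at-most-$[N:\Qz]$ primes of $N$ above each $p$, yielding the required contradiction. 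This is the step I expect to be the main obstacle.

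Once $D_\mfp = D$ holds on $\cS_1$, pick $\tilde\tau \in D$ with $\tilde\tau^{-1} H' \tilde\tau \subseteq H$; the $(H', H)$-stabiliser structure of $D$ provides such an element. By construction the identity $\beta(\chi)(\mfP) = \alpha(\chi)(\tilde\tau^{-1}\mfP)$ then holds at every prime $\mfP$ of $N$ above the density-one set $\{\mfp \cap \Zz : \mfp \in \cS_1\}$. Since two characters of order dividing $l$ that coincide on the Frobenii of a density-one set of primes are equal (by Chebotarev's density theorem), this identity extends to all primes of $N$, and the reduction of the first paragraph with $\tau := \tilde\tau|_K$ completes the proof.
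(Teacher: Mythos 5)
The statement you were asked to prove is the corollary that $\Omega_3$ is well-defined and injective. In the paper this is nearly immediate given Theorem~\ref{theorem:HomP_to_Homfield} and the Remark following it: well-definedness is exactly the combination of that theorem (existence of $\tau$) with the injectivity of $\Omega_1$ (uniqueness of $\tau$), and injectivity of $\Omega_3$ is a one-liner — if $\psi_1,\psi_2\in\HomPld$ both map to $\tau$, then $\psi_1 = \iota_{K_\tau,K'}\circ\psi_\tau = \psi_2$. Your proposal never addresses this injectivity at all and instead devotes its full length to reproving the existence of $\tau$, i.e.\ to reproving Theorem~\ref{theorem:HomP_to_Homfield}. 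That is a legitimate aim, but it is a different and substantially harder statement than the corollary in question.

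Taken as an attempt at Theorem~\ref{theorem:HomP_to_Homfield}, the argument has two concrete problems. First, the opening claim that the set $\cS_0$ of primes $\mfp\in\cS$ whose underlying rational prime splits completely in $N/\Qz$ has density one is false: by Chebotarev that set has density $1/[N:K]$ in $\cP_K$ (not $1$), and the subsequent ``density-one subset $\cS_1\subseteq\cS_0$'' cannot exist as stated. Second, and more seriously, the central step — that the double coset $D_\mfp$ is constant on a large subset — is not carried out; you explicitly flag it as ``the step I expect to be the main obstacle'' and only sketch a plan. But this is precisely the nontrivial content of the theorem. The good news is that your target of density one is both unreachable (see above) and unnecessary: two characters of order dividing $l$ already agree if they agree on a set of primes of density greater than $1/l$, and since $l>[N:\Qz]^2$ a positive density of size $\gg 1/[N:\Qz]^2$ obtained by pigeonhole suffices. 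This is exactly what the paper does in Lemma~\ref{lemma:hom_version_galois}: it partitions $\im(\varphi)$ by elements $\sigma\in\Gal(K'/\Qz)$ rather than by double cosets, extracts a $\sigma$ for which $\im(\varphi)_\sigma$ has density exceeding $1/[K':\Qz]^2$, and then invokes Chebotarev and the size bound on $l$ to force the discrepancy character to be trivial; the general case reduces to this via the Galois closure $N$ and Lemma~\ref{lemma:KinK'}. Your double-coset framing is a reasonable alternative and likely repairable along those same lines, but as written the density claim is wrong and the key step is absent, so the argument does not go through.
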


\begin{proof}
Well-definedness follows from Theorem~\ref{theorem:HomP_to_Homfield}. Injectivity is almost immediate: suppose $\psi_1, \psi_2 \in \HomPld$ have the same image $\tau$. Then
\[
\psi_1 = \iota_{K_\tau, K'} \circ \psi_\tau = \psi_2.\qedhere 
\]
\end{proof}

We first prove Theorem~\ref{theorem:HomP_to_Homfield} in the case where $K \subseteq K'$ and $K'/\Qz$ is Galois, and then deduce the general case.

\begin{lemma}\label{lemma:hom_version_galois}
Let $K, K'$ be number fields such that $K \subseteq K'$ and $K'/\Qz$ is Galois. Let $l > [K':\Qz]^2$ be a prime number, let $\psi \in \HomPld$ and let $\cS \subseteq \pks$ be a set of primes of $K$ of density one. Let $\varphi\colon \cS \to \pkx$ be a map of prime ideals associated to $\psi$ (as in Definition~\ref{definition:hom_sets}). Then $\psi$ is of the form $\iota_{K_\tau, K'} \circ \psi_\tau$, where $\tau \in \Homfield$. 
\end{lemma}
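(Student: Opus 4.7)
The plan is to show that $\psi$ agrees with one of the $[K:\Qz]$ ``geometric'' homomorphisms $\iota_{K_\tau, K'} \circ \psi_\tau$ via a pigeonhole argument on cosets followed by a Chebotarev comparison. Setting $G = \Gal(K'/\Qz)$ and $H = \Gal(K'/K)$, the embeddings $K \hookrightarrow K'$ correspond bijectively to the $[K:\Qz]$ left cosets in $G/H$. I would first restrict attention to the subset $\cS_0 := \{\mfp \in \cS : \mfp \cap \Zz \text{ is totally split in } K'/\Qz\}$; by Chebotarev, $\cS_0$ has Dirichlet density $1/[K':K]$ in $\pk$. For each $\mfp \in \cS_0$, triviality of decomposition groups at totally split primes yields a unique coset $\tau_\mfp \in G/H$ for which the induced embedding satisfies $\tau_\mfp(\mfp) = \varphi(\mfp) \cap \cO_{K_{\tau_\mfp}}$; explicitly, fixing a prime $\mfQ_0$ of $K'$ above $p = \mfp \cap \Zz$ and writing $\varphi(\mfp) = g_\varphi \mfQ_0$ and $\mfp = g_\mfp \mfQ_0 \cap \cO_K$, the coset is $\tau_\mfp = g_\varphi g_\mfp^{-1} H$.

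This partitions $\cS_0 = \bigsqcup_{\tau \in G/H} \cT_\tau$, so by pigeonhole some $\cT_{\tau_0}$ has density at least $1/[K':\Qz]$ in $\pk$. Set $\psi' := \iota_{K_{\tau_0}, K'} \circ \psi_{\tau_0}$. For each $\chi \in \Hkab[l]$ and each $\mfp \in \cT_{\tau_0}$ unramified for $\chi$, the totally-split condition combined with the definition of $\cT_{\tau_0}$ gives
\[
\psi(\chi)(\varphi(\mfp)) = \chi(\mfp) = \psi_{\tau_0}(\chi)(\tau_0(\mfp)) = \psi'(\chi)(\varphi(\mfp)).
\]
Thus the quotient $\eta := \psi(\chi) \cdot \psi'(\chi)^{-1}$, which has order dividing $l$, takes the value $1$ on $\varphi(\cT_{\tau_0})$. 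Since $\tau_0$ is injective the restriction $\varphi|_{\cT_{\tau_0}}$ is injective, and since all primes involved are totally split one has $N(\varphi(\mfp)) = N(\mfp)$, so $\varphi(\cT_{\tau_0})$ has density at least $1/[K':\Qz]$ in $\pkx$ as well.

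To close, I would invoke Chebotarev: a nontrivial order-$l$ character of $K'$ has value-$1$ locus of Dirichlet density exactly $1/l$, but the hypothesis $l > [K':\Qz]^2$ gives $1/l < 1/[K':\Qz]$, a contradiction. Hence $\eta$ is trivial, so $\psi(\chi) = \psi'(\chi)$ for every $\chi \in \Hkab[l]$, yielding $\psi = \psi'$ with $\tau := \tau_0 \in \Homfield$ the desired embedding. The main obstacle is that pigeonhole produces only a subset of $\cS$ of density $1/[K':\Qz]$ (not density $1$), so the Chebotarev step relies crucially on the ``sufficiently large $l$'' hypothesis of the lemma — a density-one identification would have been cleaner, but only $[K:\Qz]$ cosets are available for the partition, making the large-$l$ assumption indispensable.
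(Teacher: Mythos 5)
Your proof is correct, and while it shares the overall shape of the paper's argument (a pigeonhole step followed by a Chebotarev density comparison forcing a character to be trivial), the pigeonhole is organized differently and more efficiently. The paper works on the target side: it shows that $\im(\varphi) \subseteq \pkx$ has density at least $1/[K':\Qz]$, covers it by the $[K':\Qz]$ translated pieces $\im(\varphi)_\sigma$ for $\sigma \in \Gal(K'/\Qz)$, and extracts one piece of density at least $1/[K':\Qz]^2$ on which $\psi(\chi)$ agrees with $(\psi_\sigma \circ \iota_{K,K'})(\chi)$. You instead work at the source: restricting to primes $\mfp$ with $\mfp \cap \Zz$ totally split in $K'/\Qz$ makes the decomposition groups trivial, so the coset $\tau_\mfp \in G/H$ is uniquely determined and one gets a genuine partition of $\cS_0$ into only $[K:\Qz]$ classes indexed by embeddings; pigeonhole then yields a piece $\cT_{\tau_0}$ of upper Dirichlet density at least $1/[K':\Qz]$, which you transport to $\pkx$ via the norm-preserving, injective restriction of $\varphi$ to $\cT_{\tau_0}$. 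This is a factor $[K':\Qz]$ sharper, so your version of the argument would in fact prove the lemma under the weaker hypothesis $l > [K':\Qz]$. Two cosmetic observations: the pigeonhole yields only a lower bound on the \emph{upper} density of $\cT_{\tau_0}$ rather than a density, but that already suffices to contradict the Chebotarev count for a nontrivial order-$l$ character (and the paper's phrasing glosses over the same point); and the paper needs a closing paragraph to verify $\psi_\sigma \circ \iota_{K, K'} = \iota_{K_\tau, K'} \circ \psi_\tau$ for $\tau = \restr{\sigma}{K}$, a step you absorb into the coset-to-embedding dictionary from the outset.
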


\begin{proof}
Let $\cS'$ denote the set of primes of $K'$ that lie over a prime in $\cS$, and let $\cS'_p$ denote the primes in $\cS'$ that lie over the rational prime $p$. As for any $\mfp \in \cS$ we have $\mfp \cap \Zz = \varphi(\mfp) \cap \Zz$, it follows that if $\cS'_p$ is nonempty, then the intersection of $\im(\varphi)$ and $\cS'_p$ is nonempty.

The set $\pks - \cS$ has density zero, hence $\pksx - \cS'$ has density zero as well, i.e.\ $\cS'$ has density one. Because any $\cS'_p$ has size at most $[K':\Qz]$ and all primes in $\cS'_p$ have the same norm $p$, the image of $\varphi$ has density at least $1/[K':\Qz]$. For any $\sigma \in \Gal(K'/\Qz)$, let 
\[
\im(\varphi)_\sigma = \{\varphi(\mfp): \mfp \in \cS, \sigma^{-1}(\varphi(\mfp)) \textup{ lies over } \mfp \}.
\]
The group $\Gal(K'/\Qz)$ acts transitively on the primes of $K'$ lying over $\Qz$, thus for any prime $\mfp$ of $K$ there is a $\sigma \in \Gal(K'/\Qz)$ such that $\sigma^{-1}(\varphi(\mfp))$ lies over $\mfp$, hence
\[
\bigcup_{\sigma \in \Gal(K'/\Qz)} \im(\varphi)_\sigma = \im(\varphi).
\]
The size of $\Gal(K'/\Qz)$ equals $[K':\Qz]$, hence there is a $\sigma \in \Gal(K'/\Qz)$ for which $\im(\varphi)_\sigma$ is not contained in any set of density more than $1/[K':\Qz]$ times the density of $\im(\varphi)$, hence a density more than $1/[K':\Qz]^2$ in the primes of $K'$. Fix a $\sigma \in \Gal(K'/\Qz)$ for which this holds, and let $\chi \in \Hkab[l]$. For any $\varphi(\mfp) \in \im(\varphi)_\sigma$ such that $\chi$ is unramified at $\mfp$, we have that $\sigma^{-1}(\varphi(\mfp))$ lies over $\mfp$, hence
\[
\psi(\chi)(\varphi(\mfp)) = \chi(\mfp)^{f_{\varphi(\mfp)}} = \iota_{K, K'}(\chi)(\sigma^{-1}(\varphi(\mfp))) = (\psi_\sigma \circ \iota_{K, K'})(\varphi(\mfp)).
\]
As a result, the set of primes of $K'$ on which the character $(\psi_\sigma \circ \iota_{K, K'}) \psi^{-1}(\chi)$ has value $1$ is not contained in a set of density more than $1/[K':\Qz]^2$. From the Chebotarev density theorem \cite[Ch.\ VII, \S 13, p.\ 545]{neukirch2013algebraic} it follows that $(\psi_\sigma \circ \iota_{K, K'})(\psi^{-1}(\chi))$ cannot have order $l > [K':\Qz]^2$, hence it is the trivial character. We conclude that $\psi = \psi_\sigma \circ \iota_{K, K'}$.

Let $\tau = \restr{\sigma}{K}$. We finish the proof by showing that $\psi_\sigma \circ \iota_{K, K'} = \iota_{K_\tau, K'} \circ \psi_\tau$. Let $\chi \in \Hkab[l]$, and let $\mfq$ be any prime of $K'$ with $f_{\mfq} = 1$ such that $\chi$ is unramified at all primes of $K$ lying over $\mfq \cap \Zz$. Note that 
\begin{align*}
(\psi_\sigma \circ \iota_{K, K'})(\chi)(\mfq) &= \iota_{K, K'}(\chi)(\sigma^{-1}(\mfq)) \\
										 &= \chi(\sigma^{-1}(\mfq) \cap \cO_K) 
\end{align*}
and
\begin{align*}
(\iota_{K_\tau, K'} \circ \psi_\tau)(\chi)(\mfq) &= \psi_\tau(\chi)(\mfq \cap \cO_{K_\tau}) \\
										 &= \chi(\tau^{-1}(\mfq \cap \cO_{K_\tau}))\\
										 &= \chi(\sigma^{-1}(\mfq) \cap \cO_{K}).			
\end{align*}
As the primes of inertia degree one form a set of primes of density one and $\chi$ is ramified at only finitely many primes, it follows by the Chebotarev density theorem that $(\psi_\sigma \circ \iota_{K, K'})(\chi) = (\iota_{K_\tau, K'} \circ \psi_\tau)(\chi)$. As $\chi$ was chosen arbitrarily, we conclude that $\psi = \psi_\sigma \circ \iota_{K, K'} = \iota_{K^\sigma, K'} \circ \psi_\tau$.
\end{proof}

\begin{proof}[Proof of Theorem~\ref{theorem:HomP_to_Homfield}]
Let $N$ be the Galois closure of $KK'$ over $\Qz$, and let $\phi$ and $\cS$ be associated to $\psi$. We claim that $\iota_{K', N} \circ \psi\colon \Hkab \to \Hnab$ is an element of $\HomPNl$. Indeed, for any $\mfp \in \cS$, any $\chi$ unramified at $\mfp$, and any $\mfP$ lying over $\varphi(\mfp)$ we have
\[
(\iota_{K', N} \circ \psi)(\chi)(\mfP) = \psi(\chi)(\varphi(\mfp))^{f_{\mfP}/f_{\varphi(\mfp)}} = \chi(\mfp)^{f_{\mfP}}.
\]
As $l > [N:\Qz]^2$, we know by Lemma~\ref{lemma:hom_version_galois} that there exists a $\tau \in  \HomfieldN$ such that
\[
\iota_{K', N} \circ \psi = \iota_{K_\tau, N} \circ \psi_\tau.
\] 

By Lemma~\ref{lemma:KinK'} (applied to $K_\tau, K$, $N$, and $\psi \circ \psi_\tau^{-1}$) we find $K_\tau \subseteq K'$ and $\psi \circ \psi_\tau^{-1} = \iota_{K_\tau, K'}$, i.e.\ $\psi = \iota_{K_\tau, K'} \circ \psi_\tau$.
\end{proof}

\section{Counterexamples for small $l$}\label{section:counterexamples}
The goal of this section is to show that Theorem~\ref{theoremA:bijection_of_homs} does not hold without any restrictions on $l$ in terms of the degrees of the fields; in particular, we construct an explicit counterexample for any prime number $l \geq 5$. Let $N/\Qz$ be a Galois extension with Galois group $S_l$. Take any cyclic subgroup $C_l$ of order $l$ of $S_l$ and let $K= N^{C_l}$.

\begin{lemma}\label{lemma:prime_above_totally_split} 
For any rational prime $p$ in $N/\Qz$ there is a prime $\mfp$ lying over $p$ of $K$ that is totally split in $N/K$. 
\end{lemma}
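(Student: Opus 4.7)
The plan is to translate the claim into a group-theoretic statement about $S_l$ and its chosen cyclic subgroup of order $l$, reduce matters using the primality of $l$, and then rule out the only obstruction using solvability of decomposition groups.

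First I would set $G = \Gal(N/\Qz) \cong S_l$, $H = \Gal(N/K) = C_l$, fix a prime $\mfP$ of $N$ lying over $p$, and write $D = D(\mfP \mid p) \subseteq G$ for its decomposition group. By standard Galois theory the primes of $K$ above $p$ are in bijection with the double cosets $H \backslash G / D$, and the prime indexed by $H \sigma D$ has decomposition group $\sigma D \sigma^{-1} \cap C_l$ in $N/K$; in particular it is totally split in $N/K$ if and only if $\sigma D \sigma^{-1} \cap C_l = \{e\}$. Thus the task reduces to producing some $\sigma \in G$ with this intersection trivial.

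Next, because $|C_l| = l$ is prime, every subgroup of $C_l$ is either $\{e\}$ or $C_l$ itself. So the desired $\sigma$ exists if and only if there is some conjugate of $C_l$ not contained in $D$. The conjugates of $C_l$ in $S_l$ are exactly the Sylow $l$-subgroups of $S_l$, so failure would mean that $D$ contains \emph{every} Sylow $l$-subgroup of $S_l$.

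The key group-theoretic input is then that $\langle Q : Q \in \mathrm{Syl}_l(S_l) \rangle = A_l$ for $l \geq 5$. Every element of order $l$ in $S_l$ is an $l$-cycle (since the cycle lengths sum to $l$, the only cycle type of order divisible by $l$ is a single $l$-cycle), each $l$-cycle is even (because $l-1$ is even), and all $l$-cycles form a single conjugacy class in $S_l$; hence the subgroup they generate is a nontrivial normal subgroup of $S_l$ contained in $A_l$, which by the simplicity of $A_l$ must equal $A_l$. Consequently, the contradiction assumption forces $D \supseteq A_l$.

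I would close the argument by invoking solvability. Any decomposition group $D$ in a Galois extension of number fields fits into a short exact sequence $1 \to I \to D \to D/I \to 1$ in which $D/I$ is cyclic (generated by a Frobenius lift) and $I$ is solvable (an extension of a cyclic tame quotient by the wild pro-$p$ subgroup). So $D$ is solvable, contradicting $A_l \subseteq D$ since $A_l$ is nonsolvable for $l \geq 5$.

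The main obstacle, and the step most worth isolating, is the group-theoretic identification $\langle \mathrm{Syl}_l(S_l) \rangle = A_l$: once this is established, the primality of $l$ and the solvability of decomposition groups combine immediately to finish the proof, and no finer analysis of ramified versus unramified primes is required.
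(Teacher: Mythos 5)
Your proof is correct and follows essentially the same strategy as the paper's: reduce to the statement that if every prime of $K$ above $p$ has full decomposition group $C_l$ in $N/K$, then the decomposition group over $\Qz$ contains every Sylow $l$-subgroup of $S_l$ and hence $A_l$, contradicting solvability of decomposition groups. The only cosmetic difference is bookkeeping: the paper varies the conjugate subfields $K_1,\dots,K_m$ and transfers a split prime of $K_i$ back to $K$ via an isomorphism, whereas you parametrize the primes of $K$ above $p$ directly by double cosets $H\backslash G/D$ and vary the conjugate $\sigma D\sigma^{-1}$; these are two phrasings of the same computation, and your version avoids the final transfer step. You also spell out the sub-lemma $\langle \mathrm{Syl}_l(S_l)\rangle = A_l$ (via the conjugacy class of $l$-cycles and simplicity of $A_l$), which the paper states more tersely; both rely on the same facts, including $l\ge 5$ for simplicity of $A_l$.
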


\begin{proof}
(This proof is due to Hendrik Lenstra and Peter Stevenhagen.) Denote the Galois conjugates of $K$ by $K = K_1, \dots, K_m$. Let $p$ be any rational prime. For any prime $\mfq$ in $N$ lying over $p$, consider the decomposition groups $D_{\mfq, K_i} \subseteq \Gal(N/K_i)$ for $i = 1, \dots, m$. If any of these decomposition groups is trivial, say $D_{\mfq, K_i}$, then $\mfq \cap K_i$ splits completely in $N/K_i$. As $K$ and $K_i$ are isomorphic, there is also a prime $\mfp$ in $K$ that splits completely in $N/K$.

If none of the decomposition groups are trivial, then $D_{\mfq, K_i} = \text{Gal}(N/K_i)$ as $[N:K_i]$ is prime. The decomposition group $D_{\mfq, \mathbb{Q}}$ contains every $D_{\mfq, K_i} = \text{Gal}(N/K_i)$. Any $l$-cycle in $S_l$ is contained in one of the $\Gal(N/K_i)$, hence $D_{\mfq, \mathbb{Q}}$ contains a subgroup isomorphic to $A_l$. This implies that $D_{\mfq, \mathbb{Q}}$ is isomorphic to either $A_l$ or $S_l$. However, both $A_l$ and $S_l$ are not solvable (and decomposition groups should be), hence this is a contradiction. 
\end{proof}

Let $\chi_N \in \Hkab[l]$ be the character associated to the cyclic degree $l$ extension $N/K$. Any prime $\mfp$ of $K$ splits completely in $N/K$ precisely when $\chi_N(\mfp) = 1$, hence the following holds:

\begin{corollary}\label{corollary:div_char}
The character $\chi_N$ has the property that for any rational prime $p$ there exists a prime $\mfp \mid p$ of $K$ such that $\chi_N(\mfp) = 1$. \qed
\end{corollary}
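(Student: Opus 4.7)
The plan is to read the corollary as a direct translation of Lemma~\ref{lemma:prime_above_totally_split} via the definition of a Dirichlet character evaluated at a prime. The character $\chi_N$ is, by construction, a faithful character of $\Gal(N/K) \cong C_l$ (it has order $l$ and factors through this group of order $l$), so $\chi_N(\sigma) = 1$ holds precisely when $\sigma$ is the identity. The key is to pair this with the decomposition behaviour of primes in $N/K$.

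First, fix a rational prime $p$. By Lemma~\ref{lemma:prime_above_totally_split}, applied to this $p$, there exists a prime $\mfp$ of $K$ lying over $p$ that splits completely in the extension $N/K$. In particular, $\mfp$ is unramified in $N/K$, so $\chi_N(\mfp)$ is defined via the Frobenius recipe rather than the ramified convention $\chi_N(\mfp) = 0$: we have $\chi_N(\mfp) = \chi_N(\Frob{\mfp})$, where $\Frob{\mfp} \in \Gal(N/K)$.

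Since $\mfp$ splits completely, the decomposition group of any prime of $N$ above $\mfp$ is trivial, so $\Frob{\mfp}$ is the identity in $\Gal(N/K)$. Therefore $\chi_N(\mfp) = \chi_N(1) = 1$. As $p$ was arbitrary, this proves the corollary.

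There is essentially no obstacle: the entire arithmetic content is already contained in Lemma~\ref{lemma:prime_above_totally_split}, and this corollary is a one-line reformulation of "splits completely" in terms of the distinguished character $\chi_N$. The only thing to be slightly careful about is to note that $\chi_N$ is defined at $\mfp$ (i.e.\ $\mfp$ is unramified in $N/K$), which is automatic from complete splitting.
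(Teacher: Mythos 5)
Your proof is correct and matches the paper's (implicit, \qed-marked) argument: apply Lemma~\ref{lemma:prime_above_totally_split} to get a completely split $\mfp$ above $p$, then observe that complete splitting means $\Frob{\mfp}$ is trivial, hence $\chi_N(\mfp)=1$. You merely unpack the paper's one-line remark that ``$\mfp$ splits completely in $N/K$ precisely when $\chi_N(\mfp)=1$'' into its Frobenius-level justification; there is no difference of approach.
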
  

For any rational prime $p$, denote by $\varphi(p)$ the prime $\mfp$ lying over $p$ obtained from this corollary. We show that $\HomfieldQ$ and $\HomQL$ are not in bijection.  Choose a set $\{\chi_1, \chi_2, \dots\} \subseteq \Hqab[l]$ of independent generators of $\Hqab[l]$ (chosen, for example, to correspond to pairwise disjoint cyclic degree $l$ extensions of $\Qz$ whose composite is the composite of all cyclic degree $l$ extensions of $\Qz$). Define the map $\psi\colon \Hqab[l] \to \Hkab[l]$ by setting 
\[
\psi(\chi_i) = \iota_{\Qz, K}(\chi_i) \cdot \chi_{N}
\]
and extending this multiplicatively: every $\chi \in \Hqab[l]$ can be written as a finite product $\prod\limits_{i=1}^k \chi_{a_i}$, with $a_1, \dots, a_k$ positive integers, and so $\psi(\chi) = \iota_{\Qz, K}(\chi) \cdot \chi^k_N$. We proceed by showing that $\psi \in \HomQL$. Note that $L_p(\chi, T) = 1- \chi(p) T$.

If $p$ is a rational prime at which $\chi$ is unramified, we have
\begin{align*}
\psi(\chi)(\varphi(p)) &= (\iota_{\Qz, K}(\chi) \cdot \chi_{N})(\varphi(p)) 
                    \\&= \iota_{\Qz, K}(\chi)(\varphi(p)) \cdot \chi_{N}(\varphi(p)) 
                    \\&= \iota_{\Qz, K}(\chi)(\varphi(p)) 
                    \\&= \chi(p)^{f_{\varphi(p)}}.
\end{align*}
It follows that $\psi \in \HomQPld$. Moreover the polynomial $L_p(\psi(\chi), T)$ has $1 - \chi(p)^{f_{\varphi(p)}} T^{f_{\varphi(p)}}$ as a factor, which is divisible by $L_p(\chi, T) = 1- \chi(p) T$. If $\chi$ is ramified at $p$ then $L_p(\chi, T) = 1$ and hence certainly divides $L_p(\psi(\chi), T)$. Therefore $L(\chi, s)$ divides $L(\psi(\chi), s)$ and therefore $\psi \in \HomQL$, but $\psi$ is not the identity map. We conclude that $\HomfieldQ$ and $\HomQL$ are not in bijection.

\section{Characterizing a number field with a single $L$-series}\label{section:single}
We use the following setup and notation: let $l \geq 3$ be a prime number, $K$ a number field of degree $n$, $N$ a finite extension of $K$ that is Galois over $\Qz$, let $\zeta = \exp(2 \pi i / l)$ and let $\widetilde{\chi} \in \Hkab[l]$ be the character specified in \cite[Theorem 10.1]{CdSLMS}, i.e.\ there is a rational prime $p$ that splits completely into $\mfp_1, \dots, \mfp_n$ in $K/\Qz$ such that $\widetilde{\chi}(\mfp_1) = \zeta$ and $\widetilde{\chi}(\mfp_i) = 1$ for all $2 \leq i \leq n$. 

Write $G := \Gal(N/\Qz)$, $H := \Gal(N/K)$, and $C = \langle \zeta \rangle = \Gal(K_{\widetilde{\chi}}/K)$. Let $g_1, g_2, \dots, g_n \in G$ be representatives of the cosets in $G/H$, where $g_1 = e$. Let $M$ denote the Galois closure of $K_{\widetilde{\chi}} / \Qz$. Now \cite[Theorem 9.1]{CdSLMS} shows that $\Gal(M/\Qz) \simeq C^n \rtimes G$, where $G$ acts on $C^n$ by permuting the copies of $C$ in the same way $G$ permutes the cosets $G/H$. Moreover, we have $\Gal(M/K) \simeq C^n \rtimes H$. The character $\widetilde{\chi}$ factors through $\Gal(M/K)$ and maps an element $(a_1, \dots, a_n, h) \in C^n \rtimes H$ to $a_1$. 

In view of \cite[Theorem 10.1]{CdSLMS} it is sufficient to prove the following theorem:

\begin{theorem}\label{theorem:dividing_L_series_implies_equal_degree}
Let $K'$ be a number field of degree $n'$ and $\chi' \in \Hkabx$ a character such that $L(\chi', s)$ divides $L(\widetilde{\chi},s)$. Then $n = n'$. 
\end{theorem}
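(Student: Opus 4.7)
The plan is to work on the Galois side. Let $L/\Qz$ be a finite Galois extension containing $M$ and chosen large enough that both $V := \Ind_{G_K}^{G_\Qz}(\widetilde\chi)$ and $V' := \Ind_{G_{K'}}^{G_\Qz}(\chi')$ factor through $\Gamma_L := \Gal(L/\Qz)$; these are Artin representations of dimensions $n$ and $n'$. For $p$ unramified in $L$ one has $L_p(\widetilde\chi, T) = \det(1 - T \Frob{p} \mid V)$ and $L_p(\chi', T) = \det(1 - T \Frob{p} \mid V')$, so combining the hypothesis with the Chebotarev density theorem and conjugacy-invariance of characteristic polynomials yields
\[
\det(1 - Tg \mid V') \text{ divides } \det(1 - Tg \mid V) \quad\text{for every } g \in \Gamma_L.
\]
Applied to $g \in \Gal(L/M)$, where $V(g) = \mathrm{id}$, this forces every eigenvalue of $V'(g)$ to equal $1$; since $V'(g)$ has finite order it is diagonalisable, so $V'(g) = \mathrm{id}$. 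Hence $V'$ descends to a representation of $\Gamma := \Gal(M/\Qz) \simeq C^n \rtimes G$, and the divisibility continues to hold for every $g \in \Gamma$.

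The main step is to analyse $V'|_{C^n}$. A direct computation using $V = \Ind_{C^n \rtimes H}^{C^n \rtimes G}(\widetilde\chi)$ and the formula $\widetilde\chi(a_1, \ldots, a_n, h) = a_1$ shows $V|_{C^n} = \bigoplus_{i=1}^{n} \lambda_i$, where $\lambda_i \in \widehat{C^n}$ is the character $c \mapsto c(g_i H)$, and the $\lambda_i$ are pairwise distinct. Decomposing $V'|_{C^n} = \bigoplus_{\mu \in \widehat{C^n}} m_\mu\,\mu$, I will show that $m_\mu = 0$ for every $\mu \notin \{\lambda_1, \ldots, \lambda_n\}$. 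Identify $\widehat{C^n}$ with $(\Zz/l)^n$ so that $\lambda_i$ becomes the $i$-th standard basis vector $e_i$, and given $\mu = (\mu_1, \ldots, \mu_n)$ set $\mu \cdot c := \sum_i \mu_i c_i \in \Zz/l$, so that $\mu(c) = \zeta^{\mu \cdot c}$ and $\lambda_j(c) = \zeta^{c_j}$. It is enough to produce $c \in C^n$ with $\mu \cdot c \not\equiv c_j \pmod{l}$ for every $j$: applying the divisibility at $g = (c, e)$ and comparing multiplicities of the eigenvalue $\zeta^{\mu \cdot c}$ then forces $\sum_{\mu'\,:\,\mu' \cdot c \equiv \mu \cdot c} m_{\mu'} = 0$, and in particular $m_\mu = 0$. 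Such a $c$ is produced by cases: (i) if some $\mu_j \in \{2, \ldots, l-1\}$, take $c = e_j$, so that $\mu \cdot c = \mu_j$ while $c_i \in \{0, 1\}$; (ii) if all $\mu_j \in \{0, 1\}$ and $\mu$ has at least two nonzero coordinates $j_1 \neq j_2$, take $c = e_{j_1} + e_{j_2}$, so that $\mu \cdot c = 2$ while $c_i \in \{0, 1\}$; (iii) if $\mu = 0$, take $c = (1, \ldots, 1)$. The hypothesis $l \geq 3$ enters in (i) and (ii) to guarantee $\mu \cdot c \notin \{0, 1\}$ in $\Zz/l$.

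To conclude, I invoke $G$-equivariance: the identity $(0, g)(c, e)(0, g)^{-1} = (g \cdot c, e)$ shows that $(0, g)$ sends the $\mu$-weight space of $V'$ to the $(g \cdot \mu)$-weight space, so $m_\mu = m_{g \cdot \mu}$. Since $G$ acts transitively on $\{\lambda_1, \ldots, \lambda_n\}$ (via the transitive action on $G/H$), the multiplicities $m_{\lambda_1}, \ldots, m_{\lambda_n}$ all equal a common value $m$; thus $n' = \dim V' = mn$. We have $m \geq 1$ because $V' \neq 0$. For the upper bound, take $c = e_1$: the eigenvalue $\lambda_1(e_1) = \zeta$ of $V((e_1, e))$ appears with multiplicity $m$ in $V'((e_1, e))$ (only $\lambda_1$ contributes, since $\lambda_i(e_1) = 1 \neq \zeta$ for $i \geq 2$) and with multiplicity $1$ in $V((e_1, e))$, forcing $m \leq 1$. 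Therefore $m = 1$ and $n' = n$. The main obstacle is the case analysis of the second paragraph, which fails precisely at $l = 2$ (a character like $\mu = e_1 + e_2 + e_3$ cannot be separated from the coordinate projections by any $c \in C^n$), matching the lower bound $l \geq 3$ in the theorem's setup.
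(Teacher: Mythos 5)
Your proof is correct, and it takes a genuinely different route from the paper's through the central part of the argument. The paper fixes the single elements $\alpha_i = (1,\dots,1,\zeta,1,\dots,1,e)$ and studies the individual matrices $\rho'(\alpha_i)$ in the monomial basis furnished by the induction from $\chi'$: it invokes \cite[Lemma 6.6]{CdSLMS} (which exploits that the diagonal entries of a monomial matrix are roots of unity or zero, so a trace equal to $(n'-1)+\zeta$ pins them down) to conclude $\rho'(\alpha_i)$ is literally diagonal with one $\zeta$, then compares positions of the $\zeta$'s across $i$. You instead never appeal to the monomial structure of $\rho'$: you take the intrinsic weight-space decomposition of $V'|_{C^n}$ into characters $\mu$ of $C^n$, show $m_\mu = 0$ for every $\mu$ outside $\{\lambda_1,\dots,\lambda_n\}$ by a short case analysis producing $c \in C^n$ with $\mu \cdot c \neq c_j$ for all $j$, use $G$-equivariance (conjugation by $(0,g)$ permutes weight spaces as $G$ permutes $G/H$, which is transitive) to force all $m_{\lambda_i}$ equal, and finally bound the common multiplicity by $1$ via the eigenvalue $\zeta$ at $g=(e_1,e)$. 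Your approach is more uniform and arguably cleaner — it replaces the monomial-matrix trace lemma with a basis-free argument about isotypic components, at the cost of a few more cases — while the paper's is more concrete and shorter once the cited lemma is granted. Both need $l \geq 3$ for the same reason (your step (ii) and the paper's Lemma \ref{lemma:rho'_zeta_different_location} both require $\zeta^2 \neq 1$), and both conclude via $n' \geq n$ (the paper directly, you via $n' = m\cdot n$ with $m=1$), the reverse inequality being immediate from the degrees of the local factors.
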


\begin{proof}[Proof of Theorem~\ref{theoremA:existence}] If $n = n'$, then for all but finitely many $p$ (namely those unramified in $KK'$), the local factors $L_p(\widetilde{\chi}, T)$ and $L_p(\chi',T)$ have the same degree $n$, hence they are equal. But this implies that the $L$-series themselves are equal \cite[Lemma~2]{perlis}. Now \cite[Theorem 10.1]{CdSLMS} guarantees that $K \simeq K'$.
\end{proof}

The rest of the section is devoted to proving Theorem~\ref{theorem:dividing_L_series_implies_equal_degree}. Note that $n \geq n'$ as the degrees of $L_p(\widetilde{\chi}, T)$ and $L_p(\chi', T)$ are $n$ and $n'$ respectively.

Denote $\rho = \mathrm{Ind}^{G_\Qz}_{G_K} (\widetilde{\chi})$ and $\rho' = \mathrm{Ind}^{G_\Qz}_{G_{K'}} (\chi')$, so that $L(\widetilde{\chi}, s) = L(\rho, s)$ and $L(\chi',s) = L(\rho',s)$ by Artin induction \cite[VII.10.4.(iv)]{neukirch2013algebraic}. As a result $L_p(\rho', T)$ divides $L_p(\rho, T)$ for all prime numbers $p$. By construction $\rho$ factors through $\Gal(M/\Qz) \simeq C^n \rtimes G$. We show that this is the case for $\rho'$ as well.

\begin{lemma}
The induced representation $\rho'$ factors through $\Gal(M/\Qz)$. 
\end{lemma}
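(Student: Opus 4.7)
The plan is to prove that $\rho'$ factors through $\Gal(M/\Qz)$ by establishing $G_M \subseteq \ker(\rho')$. Since $\rho' = \Ind^{G_\Qz}_{G_{K'}}(\chi')$ is induced from a one-dimensional character, the standard kernel formula gives $\ker(\rho') = \bigcap_{g \in G_\Qz} g\, G_{K'_{\chi'}}\, g^{-1} = G_L$, where $L$ is the Galois closure of $K'_{\chi'}$ over $\Qz$. Hence it suffices to prove $L \subseteq M$, which I will do by showing that almost every rational prime that splits completely in $M$ also splits completely in $L$; the Chebotarev density theorem (applied to the Galois extensions $L$ and $M$ of $\Qz$) then forces $L \subseteq M$.

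To produce this inclusion of splitting sets, let $S$ be the finite set of rational primes that either ramify in $K'$ or lie below a prime of $K'$ at which $\chi'$ ramifies. For $p \notin S$, every factor in $L_p(\chi', T) = \prod_{\mfq \mid p}(1 - \chi'(\mfq) T^{f_\mfq})$ is nontrivial, so $\deg L_p(\chi', T) = n'$. If in addition $p$ splits completely in $M$, then $\Frob_p$ is trivial in $\Gal(M/\Qz)$, and since $\rho$ factors through $\Gal(M/\Qz)$ we obtain $L_p(\widetilde{\chi}, T) = (1-T)^n$. The hypothesis $L_p(\chi', T) \mid L_p(\widetilde{\chi}, T)$ together with the degree constraint then forces $L_p(\chi', T) = (1-T)^{n'}$.

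The key algebraic step is to translate this equality back to the individual local factors: each factor $1 - \chi'(\mfq) T^{f_\mfq}$ must itself divide $(1-T)^n$, and its roots are the $f_\mfq$-th roots of $\chi'(\mfq)^{-1}$. For all of these distinct roots to equal $1$, one needs $f_\mfq = 1$ and $\chi'(\mfq) = 1$. Applying this to every $\mfq \mid p$ shows that $p$ splits completely in $K'$ and that every prime of $K'$ above $p$ splits completely in $K'_{\chi'}/K'$, so $p$ splits completely in $K'_{\chi'}$ and hence in its Galois closure $L$.

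I do not expect a serious obstacle here, as the argument is a routine interplay between divisibility of local $L$-factors and Chebotarev. The step requiring the most care is the polynomial analysis that simultaneously pins down $f_\mfq$ and $\chi'(\mfq)$ from $(1 - \chi'(\mfq) T^{f_\mfq}) \mid (1-T)^n$, since this is the place where the conclusion about complete splitting in $K'_{\chi'}$ actually gets extracted from the divisibility hypothesis.
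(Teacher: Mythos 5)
Your proof is correct, and it follows the same Chebotarev-plus-local-$L$-factor mechanism as the paper, but packages it a bit differently. The paper picks any auxiliary Galois $M'$ through which $\rho'$ factors, takes $\tau\in\Gal(MM'/M)$, realises $\tau$ as $\Frob_p$, observes $L_p(\rho,T)=(1-T)^n$ and hence $L_p(\rho',T)=(1-T)^{n'}$, and then concludes $\rho'(\tau)=I$ directly from the fact that a finite-order matrix whose characteristic polynomial is $(1-T)^{n'}$ must be the identity. You instead identify $\ker(\rho')$ explicitly as $G_L$ via the kernel formula for induced representations, reduce the claim to the field containment $L\subseteq M$, and extract that from the divisibility hypothesis by an analysis of the individual Euler factors $1-\chi'(\mfq)T^{f_\mfq}$ forcing $f_\mfq=1$ and $\chi'(\mfq)=1$, i.e.\ complete splitting in $K'_{\chi'}$. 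The two routes are logically equivalent; yours trades the ``unipotent of finite order $\Rightarrow$ identity'' step for a field-theoretic splitting argument, and makes the field $L$ through which $\rho'$ factors explicit, at the cost of a few more lines.
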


\begin{proof}
Let $M'$ be a finite Galois extension of $\Qz$ such that $\rho'$ factors through the Galois group $\Gal(M'/\Qz)$. Consider the Galois group $\Gal(MM'/\Qz)$ and note that both $\rho$ and $\rho'$ factor through this Galois group. Let $\tau \in \Gal(MM'/M)$. By the Chebotarev density theorem there is a prime $p$ (unramified in $MM'/\Qz)$ such that $\Frob{p} = \tau$. As $\rho$ factors through $\Gal(M/\Qz)$, $L_p(\rho, T) = (1-T)^n$. As a result we have $L_p(\rho',T) = (1-T)^{n'}$ and it follows that $\rho'(\tau)$ is the identity matrix. Hence $\rho'$ factors through $\Gal(M/\Qz)$.
\end{proof}

\begin{lemma}
For any $t \in C^n \rtimes G \simeq \Gal(M/\Qz)$ the eigenvalues of $\rho'(t)$ are a subset (with multiplicity) of those of $\rho(t)$. 
\end{lemma}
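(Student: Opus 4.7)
The plan is to combine Chebotarev density with the divisibility hypothesis on local $L$-factors, exploiting that both representations factor through the \emph{finite} Galois group $\Gal(M/\Qz)$. The passage from divisibility of $L$-factors to divisibility of characteristic polynomials is immediate as long as one evaluates at unramified primes, and Chebotarev ensures every element of $\Gal(M/\Qz)$ arises (up to conjugation) as a Frobenius at such a prime.

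More precisely: fix $t \in C^n \rtimes G \simeq \Gal(M/\Qz)$. By the Chebotarev density theorem applied to $M/\Qz$, there exists a rational prime $p$, unramified in $M$, such that the Frobenius conjugacy class of $p$ in $\Gal(M/\Qz)$ contains $t$. Because $p$ is unramified in $M$ and both $\rho$ and $\rho'$ factor through $\Gal(M/\Qz)$, the inertia at $p$ acts trivially on both representation spaces, so
\[
L_p(\rho, T) = \det\bigl(1 - \rho(t)\, T\bigr), \qquad L_p(\rho', T) = \det\bigl(1 - \rho'(t)\, T\bigr);
\]
the right-hand sides depend only on the conjugacy class of $t$, so the choice of prime of $M$ above $p$ is irrelevant. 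By Artin induction we have $L_p(\rho,T) = L_p(\widetilde{\chi}, T)$ and $L_p(\rho',T) = L_p(\chi',T)$, and the hypothesis then gives $L_p(\rho', T) \mid L_p(\rho, T)$ as polynomials in $T$.

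Thus $\det(1 - \rho'(t) T)$ divides $\det(1 - \rho(t) T)$ in $\Cz[T]$. Since the roots (with multiplicity) of $\det(1-\rho(t)T)$ are precisely the reciprocals of the eigenvalues of $\rho(t)$, and similarly for $\rho'(t)$, this divisibility is equivalent to the assertion that the multiset of eigenvalues of $\rho'(t)$ is contained in the multiset of eigenvalues of $\rho(t)$. This is exactly the statement of the lemma.

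There is no real obstacle here beyond bookkeeping: the only subtleties are to verify that the prime $p$ produced by Chebotarev can be chosen unramified in the common field $M$ (standard), and to invoke Artin induction to identify $L(\widetilde{\chi},s)$ with $L(\rho,s)$ and $L(\chi',s)$ with $L(\rho',s)$ locally, which is what allows the hypothesis on Dirichlet $L$-series to be read as a statement about characteristic polynomials of Galois representations.
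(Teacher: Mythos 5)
Your proof is correct and is essentially the same as the paper's: pick an unramified rational prime $p$ with Frobenius conjugacy class containing $t$ (Chebotarev), identify the local factors with characteristic polynomials, and read off the eigenvalue containment from the divisibility $L_p(\rho',T)\mid L_p(\rho,T)$. The only difference is that you re-derive the identification $L_p(\rho,T)=L_p(\widetilde{\chi},T)$ and $L_p(\rho',T)=L_p(\chi',T)$ via Artin induction inside the proof, whereas the paper records this once in the setup preceding the lemma.
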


\begin{proof}
Let $t \in C^n \rtimes G$ and let $p$ be a rational prime such that $\Frob{p} = t$. Then $L_p(\rho, T) = \det(I_n - \rho(t)T)$ and $L_p(\rho, T) = \det(I_{n'} - \rho'(t)T)$, whose zeroes are the inverses of the eigenvalues of $\rho(t)$ and $\rho'(t)$ respectively. As $L_p(\rho',T)$ divides $L_p(\rho, T)$, the result follows.
\end{proof}

Let $\alpha_i = (1, \dots, 1, \zeta, 1, \dots, 1, e) \in C^n \rtimes G$, with $\zeta$ as the $i^\text{th}$ coordinate.

\begin{lemma}
The matrix $\rho(\alpha_i)$ is a diagonal matrix with value $\zeta$ at the $i^\text{th}$ row and value $1$ otherwise.
\end{lemma}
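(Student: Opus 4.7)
The plan is to apply the standard matrix description of an induced representation with respect to a choice of coset representatives. Since $\widetilde{\chi}$ factors through $\Gal(M/K) \simeq C^n \rtimes H$, I use the lifts $(1; g_1), \ldots, (1; g_n) \in C^n \rtimes G$ of the coset representatives $g_1, \ldots, g_n$ of $G/H$ as basis elements for the induced module. The $(j,k)$-entry of $\rho(t)$ then equals $\widetilde{\chi}\bigl((1; g_j)^{-1} \cdot t \cdot (1; g_k)\bigr)$ whenever this product lies in $C^n \rtimes H$, and vanishes otherwise.

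Next I compute this conjugate for $t = \alpha_i = (c_i; e)$, writing $c_i \in C^n$ for the tuple with $\zeta$ in position $i$ and $1$ elsewhere, using the semidirect-product multiplication rule $(a; g)(b; h) = (a \cdot {}^g b;\ gh)$. A direct calculation yields
\[
(1; g_j)^{-1} \, \alpha_i \, (1; g_k) \ =\ \bigl({}^{g_j^{-1}} c_i;\ g_j^{-1} g_k\bigr).
\]
This lies in $C^n \rtimes H$ if and only if $g_j^{-1} g_k \in H$; since $g_1, \ldots, g_n$ are inequivalent coset representatives of $G/H$, this forces $j = k$. Hence $\rho(\alpha_i)$ is diagonal, with $j$-th diagonal entry $\widetilde{\chi}\bigl({}^{g_j^{-1}} c_i;\ e\bigr)$, which by the explicit description of $\widetilde{\chi}$ (projection onto the first $C$-coordinate) equals the first coordinate of ${}^{g_j^{-1}} c_i$.

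The last step is to unwind the $G$-action on $C^n$, which by construction permutes coordinates the same way $G$ permutes the cosets $G/H$ by left multiplication. Let $\pi_g$ denote the induced permutation of $\{1, \dots, n\}$, defined by $g \cdot g_k H = g_{\pi_g(k)} H$. Since $g_1 = e$, one has $\pi_{g_j}(1) = j$, so the first coordinate of ${}^{g_j^{-1}} c_i$ equals the $j$-th coordinate of $c_i$, namely $\zeta$ if $j = i$ and $1$ otherwise. The only delicate point is tracking the direction of the coordinate permutation (left-versus-right action conventions), which amounts to observing that $\pi_{g_j^{-1}}^{-1} = \pi_{g_j}$; once that bookkeeping is in place, the rest is a mechanical unpacking of the semidirect-product structure recalled just before the statement.
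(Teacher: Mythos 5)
Your proof is correct and follows essentially the same route as the paper's: both unwind the standard description of the induced representation against the coset representatives $(1;g_j)$ of $C^n\rtimes G$ modulo $C^n\rtimes H$, reduce to the fact that conjugating $\alpha_i$ by $(1;g_j)$ stays inside $C^n$, and then read off the value of $\widetilde{\chi}$ on the first coordinate. The paper phrases this via the identity $\alpha_i(1;g_j)=(1;g_j)\alpha_k$ with $g_j^{-1}g_i\in g_kH$, while you compute the matrix entry $\widetilde{\chi}\bigl((1;g_j)^{-1}\alpha_i(1;g_k)\bigr)$ directly; these are the same calculation, and your bookkeeping of the permutation action (including $\pi_{g_j^{-1}}^{-1}=\pi_{g_j}$ and $\pi_{g_j}(1)=j$) is accurate.
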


\begin{proof}
Note that in $C^n \rtimes G$, 
\[
\alpha_i (1, \dots, 1, g_j) = (1, \dots, 1, g_j) \alpha_{k},
\]
where $k$ is such that $g_j^{-1} g_i \in g_k H$. By definition of the induced representation, $\rho(\alpha_i)$ is a diagonal matrix with value $\widetilde{\chi}(\alpha_k)$ on the $j^\text{th}$ row. This value is $\zeta$ exactly when $k = 1$ and $1$ otherwise. As $g_j^{-1} g_i \in H$ if and only if $j = i$, the result follows. 
\end{proof}

\begin{lemma}\label{lemma:rho'_diagonal}
The matrix $\rho'(\alpha_i)$ is diagonal for any $1 \leq i \leq n$. 
\end{lemma}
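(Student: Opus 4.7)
The plan is to invoke simultaneous diagonalization of a commuting family of diagonalizable matrices. The two inputs I need are (i) the $\alpha_i$ pairwise commute in $C^n \rtimes G$, and (ii) each $\rho'(\alpha_i)$ is individually diagonalizable.

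For (ii): the element $\alpha_i$ has order $l$ in $C^n \rtimes G$, so $\rho'(\alpha_i)^l = I$, and consequently the minimal polynomial of $\rho'(\alpha_i)$ divides $x^l - 1$. Since $x^l-1$ has $l$ distinct roots in $\Cz$, the operator $\rho'(\alpha_i)$ is diagonalizable. (As a bonus, the preceding lemma on eigenvalues tells us that the eigenvalues of $\rho'(\alpha_i)$ lie in the spectrum $\{1,\zeta\}$ of $\rho(\alpha_i)$, though this is not needed for the present claim.)

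For (i): the elements $\alpha_1,\dots,\alpha_n$ all lie inside the abelian subgroup $C^n \leq C^n \rtimes G$, and hence pairwise commute. Therefore $\rho'(\alpha_1),\dots,\rho'(\alpha_n)$ form a commuting family of diagonalizable linear operators on the representation space of $\rho'$. The standard simultaneous diagonalization theorem from linear algebra then produces a basis of common eigenvectors, and with respect to this basis each $\rho'(\alpha_i)$ is diagonal, which is the claim.

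I do not expect a significant obstacle here; the argument is essentially one-shot. The only subtlety worth highlighting is one of interpretation: the lemma must be read as asserting the existence of a single basis in which all $\rho'(\alpha_i)$ are simultaneously diagonal (rather than each being diagonal in its own basis), which is exactly what simultaneous diagonalization of a commuting family of diagonalizable matrices provides, and this basis will presumably be fixed for the analysis that follows.
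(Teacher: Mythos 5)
Your proof is correct, but it takes a genuinely different route from the paper. The paper's argument works directly in the monomial basis coming from the induced representation $\rho' = \mathrm{Ind}^{G_\Qz}_{G_{K'}}(\chi')$: it uses the eigenvalue constraint from the preceding lemma to pin down the trace of $\rho'(\alpha_i)$ as either $n'$ or $n'-1+\zeta$, and then invokes a lemma from \cite{CdSLMS} (Lemma~6.6 there, on sums of roots of unity) to conclude that every diagonal entry of the monomial matrix $\rho'(\alpha_i)$ is nonzero, which forces all off-diagonal entries to vanish. Your argument instead observes that $\alpha_i^l = e$ makes each $\rho'(\alpha_i)$ diagonalizable, that the $\alpha_i$ commute in the abelian subgroup $C^n$, and then applies simultaneous diagonalization. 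This is cleaner and more self-contained: it bypasses the root-of-unity-sum lemma entirely and does not need the trace computation. The trade-off is that your diagonalizing basis is existential rather than the explicit monomial one, but that is enough for what follows --- Lemma~\ref{lemma:rho'_one_zeta} is phrased in terms of eigenvalues and traces (basis-independent), and Lemma~\ref{lemma:rho'_zeta_different_location} and the final degree count only require some \emph{fixed} common diagonalizing basis, which your argument supplies just as well. You correctly flagged the one interpretive subtlety, namely that the lemma must be read as asserting a single simultaneous eigenbasis for all the $\alpha_i$.
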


\begin{proof}
The eigenvalues of $\rho(\alpha_i)$ are $\zeta$ and $n-1$ times $1$, hence there are two possibilities for the eigenvalues of $\rho'(\alpha_i)$:
\begin{itemize}
\item $\rho'(\alpha_i)$ has $n'$ eigenvalues $1$: then $\rho'(\alpha_i)$ is the identity matrix (which is certainly diagonal);
\item $\rho'(\alpha_i)$ has $n'-1$ eigenvalues $1$ and one eigenvalue $\zeta$: the trace of $\rho'(\alpha_i)$ then equals $n' - 1 + \zeta$, which also equals the sum of the diagonal entries. By \cite[Lemma 6.6]{CdSLMS} we find that the diagonal entries must all equal $1$ except a single one which equals $\zeta$. In particular $\rho'(\alpha_i)$ is diagonal. \qedhere
\end{itemize}
\end{proof}

\begin{lemma}\label{lemma:rho'_one_zeta}
For all $1 \leq i \leq n$ the matrix $\rho'(\alpha_i)$ has $n'-1$ diagonal entries equal to $1$ and one entry equal to $\zeta$.
\end{lemma}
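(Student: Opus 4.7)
My plan is to rule out the first possibility in Lemma~\ref{lemma:rho'_diagonal}, namely $\rho'(\alpha_i) = I_{n'}$, so that only the second possibility survives.

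First I would observe that $\alpha_1, \dots, \alpha_n$ are all conjugate inside $C^n \rtimes G$. Concretely, conjugation of $\alpha_1$ by $(1, \dots, 1, g) \in C^n \rtimes G$ yields $(g \cdot (\zeta, 1, \dots, 1), e)$, where $g$ permutes the coordinates of $C^n$ according to its action on $G/H$. Since $G$ acts transitively on $G/H$, for every $i$ there exists $g \in G$ taking $\alpha_1$ to $\alpha_i$. Therefore the matrices $\rho'(\alpha_i)$ are mutually conjugate and in particular share the same trace; combined with Lemma~\ref{lemma:rho'_diagonal} this forces that either $\rho'(\alpha_i) = I_{n'}$ for all $i$, or else each $\rho'(\alpha_i)$ has exactly one diagonal entry equal to $\zeta$ and the remaining $n'-1$ entries equal to $1$.

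Next I would eliminate the first case. Suppose $\rho'(\alpha_i) = I_{n'}$ for every $i$. Since $\zeta$ generates $C$, the elements $\alpha_1, \dots, \alpha_n$ generate $C^n$, so $\rho'$ would be trivial on all of $C^n$. Apply this to $t = (\zeta, \zeta, \dots, \zeta, e) \in C^n \rtimes G$: the same computation used in the lemma about $\rho(\alpha_i)$ (where $t \in C^n \subseteq \Gal(M/K)$ stabilises every coset in the induced-representation basis) shows that $\rho(t)$ is diagonal with every diagonal entry equal to $\zeta$, so every eigenvalue of $\rho(t)$ equals $\zeta$. On the other hand $\rho'(t) = I_{n'}$ has every eigenvalue equal to $1$. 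By the lemma asserting that the eigenvalues of $\rho'(t)$ are a sub-multiset of those of $\rho(t)$, together with $\zeta \neq 1$ (since $l \geq 3$), we would be forced to conclude $n' = 0$, contradicting that $K'$ is a number field.

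Together these two steps leave only the second case of Lemma~\ref{lemma:rho'_diagonal}, which is exactly the desired statement. I do not expect any real obstacle: the conjugacy of the $\alpha_i$'s is immediate from the definition of the semidirect product action, and the evaluation of $\rho$ on $(\zeta, \dots, \zeta, e)$ is a direct copy of the argument already carried out for $\rho(\alpha_i)$.
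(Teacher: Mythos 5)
Your proof is correct and uses essentially the same ingredients as the paper's: the conjugacy of the $\alpha_i$ (to transfer the trace condition across all $i$), the element $t = \prod_{i} \alpha_i = (\zeta,\dots,\zeta,e)$ with $\rho(t) = \zeta I_n$, and the eigenvalue-containment lemma to derive the contradiction. The only difference is the order: you establish the dichotomy (all identity versus all with one $\zeta$) via conjugacy first and then rule out the identity case, while the paper first rules out the all-identity case to find one good $i$ and then propagates by conjugacy; this is a cosmetic rearrangement, and your intermediate remark that $\rho'$ vanishes on all of $C^n$ is slightly more than is needed (only $\rho'(t) = I_{n'}$ is used).
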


\begin{proof}
From the proof of Lemma~\ref{lemma:rho'_diagonal} we have that $\rho'(\alpha_i)$ has at least $n'-1$ eigenvalues $1$, and that the last eigenvalue is either $\zeta$ or $1$. Suppose that for all $1 \leq i \leq n$, the matrix $\rho'(\alpha_i)$ has $n'$ eigenvalues $1$, i.e. $\rho'(\alpha_i)$ is the identity matrix. Let $t = \prod\limits_{1 \leq i \leq n} \alpha_i$. Then
\[
\rho'(t) = \prod_{1 \leq i \leq n} \rho'(\alpha_i)  = I_{n'}.
\]
Note, however, that $\rho(t) = \prod_{1 \leq i \leq n} \rho(\alpha_i) = \zeta I_{n}$. This contradicts the fact that the all eigenvalues of $\rho'(t)$ are eigenvalues of $\rho(t)$. 

Hence there is an $1 \leq i \leq n$ such that  $\rho'(\alpha_i)$ has $n'-1$ diagonal entries equal to $1$ and one entry equal to $\zeta$. However, as $(1, \dots, 1, g_i) \alpha_1 (1, \dots, 1, g_i^{-1}) =  \alpha_i$, we see that all $\alpha_j$ are conjugate, hence $\rho'(\alpha_j)$ has trace $n - 1 + \zeta$ for every $1\leq j\leq n$. It is therefore of the desired form.
\end{proof}

\begin{lemma}\label{lemma:rho'_zeta_different_location}
For any $1 \leq i < j \leq n$ the location of the $\zeta$ in $\rho'(\alpha_i)$ and $\rho'(\alpha_j)$ is different. 
\end{lemma}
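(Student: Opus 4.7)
The plan is to argue by contradiction. Suppose that for some $i\neq j$ the matrices $\rho'(\alpha_i)$ and $\rho'(\alpha_j)$ both have their unique diagonal entry equal to $\zeta$ at the same position, say position $k$. I will produce a single group element whose image under $\rho'$ has an eigenvalue that does not appear among the eigenvalues of its image under $\rho$, contradicting the eigenvalue-submultiset property we have already established.

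The element to test is $t := \alpha_i \alpha_j \in C^n \rtimes G$. Since both $\alpha_i$ and $\alpha_j$ lie in the normal subgroup $C^n$ with trivial $G$-component, $t$ itself is just the tuple in $C^n$ with $\zeta$ at coordinates $i$ and $j$ and $1$ elsewhere. Using the explicit description of $\rho$ from the lemma preceding Lemma~\ref{lemma:rho'_diagonal}, I compute $\rho(t) = \rho(\alpha_i)\rho(\alpha_j)$, which is diagonal with $\zeta$ at rows $i$ and $j$ and $1$ otherwise. Hence the multiset of eigenvalues of $\rho(t)$ is $\{\zeta,\zeta,1,\dots,1\}$ (with $n-2$ ones).

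On the $\rho'$ side, by Lemma~\ref{lemma:rho'_diagonal} both $\rho'(\alpha_i)$ and $\rho'(\alpha_j)$ are diagonal in the same basis, so they commute and their product $\rho'(t)$ is also diagonal. By the contradiction hypothesis, both contribute a factor $\zeta$ at position $k$, so $\rho'(t)$ has $\zeta^2$ at position $k$ and $1$ at every other diagonal position. In particular $\zeta^2$ is an eigenvalue of $\rho'(t)$, but because $l\geq 3$, $\zeta^2 \notin\{1,\zeta\}$ and therefore $\zeta^2$ is not an eigenvalue of $\rho(t)$; this contradicts the containment of eigenvalue multisets proven earlier in the section.

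The argument is essentially a one-line computation, and I do not expect a serious obstacle. The only point to verify carefully is that $\rho(\alpha_i)$ and $\rho(\alpha_j)$, respectively $\rho'(\alpha_i)$ and $\rho'(\alpha_j)$, genuinely multiply entrywise — which is immediate from their being simultaneously diagonal — and that the hypothesis $l\geq 3$ (from the standing setup of Section~\ref{section:single}) is what rules out $\zeta^2 \in \{1,\zeta\}$.
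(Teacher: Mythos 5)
Your proof is correct and follows essentially the same approach as the paper: consider $t=\alpha_i\alpha_j$, observe that $\rho'(t)$ would have eigenvalue $\zeta^2$ while $\rho(t)$ has eigenvalues only $\zeta$ (twice) and $1$ ($n-2$ times), contradicting the eigenvalue containment. You helpfully make explicit where the standing hypothesis $l\geq 3$ is used to ensure $\zeta^2\notin\{1,\zeta\}$, which the paper leaves implicit.
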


\begin{proof}
Suppose it is the same for some $i,j$. Then $\rho'(\alpha_i \alpha_j)$ has an eigenvalue $\zeta^2$, which does not occur as an eigenvalue of $\rho(\alpha_i \alpha_j)$ (which has two eigenvalues $\zeta$ and $n-2$ eigenvalues $1$).
\end{proof}

\begin{proof}[Proof of Theorem~\ref{theorem:dividing_L_series_implies_equal_degree}]
The matrices $\rho'(\alpha_1), \dots, \rho'(\alpha_n)$ all have exactly one $\zeta$ on the diagonal (Lemma~\ref{lemma:rho'_one_zeta}) and the locations of these $\zeta$s are pairwise different (Lemma~\ref{lemma:rho'_zeta_different_location}), and therefore $n' \geq n$, hence $n' =  n$.
\end{proof}

\section*{Acknowledgements}
It is a pleasure to acknowledge the encouragement, suggestions, and remarks from Gunther Cornelissen. I would also like to express thanks to Hendrik Lenstra and Peter Stevenhagen for the proof of Lemma~\ref{lemma:prime_above_totally_split} in the case of ramified primes. Many thanks to the Max Planck Institute for Mathematics in Bonn for its financial support and inspiring atmosphere.

\end{document}